	\newcommand{\one}{\mathds{1}}
\numberwithin{equation}{section}
\newcommand{\eq}[1]{\begin{linenomath}\postdisplaypenalty=0\begin{align*} #1 \end{align*}\end{linenomath}}
\newcommand{\eeq}[1]{\begin{linenomath}\postdisplaypenalty=0\begin{align} \begin{split} #1 \end{split} \end{align}\end{linenomath}}
\newcommand{\eeqs}[2]{\begin{subequations}\label{#1}\begin{linenomath}\postdisplaypenalty=0\begin{align} #2 \end{align}\end{linenomath}\end{subequations}}
\newcommand{\stackref}[2]{\stackrel{\mbox{\footnotesize{\eqref{#1}}}}{#2}}
\newcommand{\stackrefp}[2]{\stackrel{\phantom{\mbox{\footnotesize{\eqref{#1}}}}}{#2}}
\def\eps{\varepsilon}
\def\vphi{\varphi}
\newcommand{\E}{\mathbb{E}}
\renewcommand{\P}{\mathbb{P}}
\newcommand{\R}{\mathbb{R}}
\newcommand{\Z}{\mathbb{Z}}
\newcommand{\CC}{\mathcal{C}}
\newcommand{\FF}{\mathcal{F}}
\newcommand{\HH}{\mathcal{H}}
\newcommand{\PP}{\mathcal{P}}
\newcommand{\RR}{\mathcal{R}}
\newcommand{\ZZ}{\mathcal{Z}}
\newcommand{\DDD}{\mathscr{D}}
\newcommand{\PPP}{\mathscr{P}}
\newcommand{\Esf}{\mathsf{E}}
\newcommand{\vc}[1]{{\boldsymbol #1}}
\newcommand{\wt}[1]{\widetilde{#1}}
\newcommand{\wh}[1]{\widehat{#1}}
\DeclareMathOperator{\Var}{Var}
\DeclareMathOperator{\Cov}{Cov}
\DeclareMathOperator{\Corr}{Corr}
\newcommand{\givenp}[3][]{#1( #2 \: #1| \: #3 #1)} % with parentheses
\DeclareMathOperator{\e}{e} % exponential
\newcommand{\cc}{\mathrm{c}} % for set complements and "critical" subscripts
\newcommand{\dd}{\mathrm{d}} % for differentials
            \DeclareFontFamily{OMX}{MnSymbolE}{}
            \DeclareSymbolFont{MnLargeSymbols}{OMX}{MnSymbolE}{m}{N}
            \DeclareFontShape{OMX}{MnSymbolE}{m}{N}{
                <-6>  MnSymbolE5
               <6-7>  MnSymbolE6
               <7-8>  MnSymbolE7
               <8-9>  MnSymbolE8
               <9-10> MnSymbolE9
              <10-12> MnSymbolE10
              <12->   MnSymbolE12
            }{}
            \DeclareFontShape{OMX}{MnSymbolE}{b}{N}{
                <-6>  MnSymbolE-Bold5
               <6-7>  MnSymbolE-Bold6
               <7-8>  MnSymbolE-Bold7
               <8-9>  MnSymbolE-Bold8
               <9-10> MnSymbolE-Bold9
              <10-12> MnSymbolE-Bold10
              <12->   MnSymbolE-Bold12
            }{}
            \let\llangle\@undefined
            \let\rrangle\@undefined
            \DeclareMathDelimiter{\llangle}{\mathopen}%
                                 {MnLargeSymbols}{'164}{MnLargeSymbols}{'164}
            \DeclareMathDelimiter{\rrangle}{\mathclose}%
                                 {MnLargeSymbols}{'171}{MnLargeSymbols}{'171}
    \DeclareFontFamily{U}{matha}{\hyphenchar\font45}
    \DeclareFontShape{U}{matha}{m}{N}{ <-6> matha5 <6-7> matha6 <7-8>
    matha7 <8-9> matha8 <9-10> matha9 <10-12> matha10 <12-> matha12 }{}
    \DeclareSymbolFont{matha}{U}{matha}{m}{N}
    \DeclareFontFamily{U}{mathx}{\hyphenchar\font45}
    \DeclareFontShape{U}{mathx}{m}{N}{ <-6> mathx5 <6-7> mathx6 <7-8>
    mathx7 <8-9> mathx8 <9-10> mathx9 <10-12> mathx10 <12-> mathx12 }{}
    \DeclareSymbolFont{mathx}{U}{mathx}{m}{N}
    \DeclareMathDelimiter{\llbrack} {4}{matha}{"76}{mathx}{"30}
    \DeclareMathDelimiter{\rrbrack} {5}{matha}{"77}{mathx}{"38}
\newcommand{\Pb}{\mathbf{P}}
\newtheorem{thm}{Theorem}[section]
\newtheorem{prop}[thm]{Proposition}
\newtheorem{cor}[thm]{Corollary}
\newtheorem{lemma}[thm]{Lemma}
\newtheorem{claim}[thm]{Claim}
\newtheorem{theirthm}{Theorem} % special environment for referencing theorems of others
\theoremstyle{definition}
\newtheorem{remark}[thm]{Remark}
\renewcommand{\thefootnote}{\fnsymbol{footnote}}
\title{Full-path localization of directed polymers}
\subjclass[2010]{60G15, % Gaussian processes
60G17, % sample path properties
60K37, % processes in random environments
82B44, % disorder systems
82D30, % random media, disordered materials (including liquid crystals and spin glasses)
82D60} % polymers
\keywords{Directed polymers, path localization, replica overlap, Gaussian disorder}
\author{Erik Bates}
\thanks{This research was supported by NSF grant DMS-1902734} 
\address{\newline Department of Mathematics \newline University of Wisconsin--Madison \newline Van Vleck Hall \newline 480 Lincoln Drive \newline Madison, WI 53706-1324 \newline United States \newline \textup{\tt ewbates@wisc.edu}}
\begin{document}
\bibliographystyle{acm}

% changes footnote labeling back to numbers
\renewcommand{\thefootnote}{\arabic{footnote}} \setcounter{footnote}{0}

%: ABSTRACT
\begin{abstract}
Certain polymer models are known to exhibit path localization in the sense that at low temperatures, the average fractional overlap of two independent samples from the Gibbs measure is bounded away from $0$.
Nevertheless, the question of where along the path this overlap takes place has remained unaddressed.
In this article, we prove that on linear scales, overlap occurs along the entire length of the polymer.
Namely, we consider time intervals of length $\varepsilon N$, where $\varepsilon>0$ is fixed but arbitrarily small.
We then identify a constant number of distinguished trajectories such that the Gibbs measure is concentrated on paths having, with one of these distinguished paths, a fixed positive overlap simultaneously in every such interval.
This result is obtained in all dimensions for a Gaussian random environment by using a recent non-local result as a key input.
\end{abstract}

\maketitle
%\tableofcontents

 % uncomment to display line numbers
%\linenumbers

%: INTRODUCTION
\section{Introduction}
In statistical physics, the phenomenon of \textit{localization} refers to the tendency of disordered systems, especially at low temperatures, to revert to one of a small number of energetically favorable states, even as the size of the system diverges.
%In other words, the statistical entropy grows at a much slower rate---even possibly remains bounded---than would that of the same system without disorder.
Beginning with Anderson's formative work \cite{anderson58}, it has been a general goal to describe conditions (\textit{e.g.}~the presence of random impurities, random interaction strengths, or random geometry) under which localization occurs.
Often, for a given model, a main challenge is to characterize free energy non-analyticities---which may be already difficult to rigorously detect---as separators between non-localized and localized phases.
If this can be done, it gives rise to the task of more precisely quantifying the system's behavior in either regime, the localized phase being more physically anomalous and thus harder to predict.

This paper focuses on such questions for \textit{directed polymers in random environment}.
Defined in the next section, this model was introduced by Huse and Henley \cite{huse-henley85} to study interfaces of the Ising model subject to random impurities, and later adopted in the mathematics literature by Imbrie and Spencer \cite{imbrie-spencer88} as a model for polymer growth in random media.
At low temperatures, directed polymers exhibit localization properties which have been a frequent object of study over the last forty years; a nearly complete survey is provided in the book of Comets \cite{comets17}, and related models are discussed in \cite{denhollander09}.

Most of the literature on localization has focused on the polymer's endpoint distribution, but very recently there has been progress in proving pathwise localization.
For certain random environments at sufficiently low temperature, it is now known that if two polymers are sampled independently under the same environment, then with non-vanishing probability they will intersect for a non-vanishing fraction of their length.
However, owing to the global nature of this property, the results to date provide little information on the local structure needed to produce this effect; for instance, \textit{where} these intersections occur.
The main purpose of this paper is to provide a first result in this direction, stated as Theorem \ref{main_thm} in Section \ref{main_result}.

Interestingly, the central input to the proof is a recent \textit{non}-local path localization result from \cite{bates-chatterjee20II}.
This plan of attack is natural from the perspective of random walks (from which polymers are defined), whose structure of i.i.d.~increments frequently allows one to translate between local and global information.
For directed polymers, however, there is no obvious renewal feature to function in the same way.
Fortunately, we identify as a weak surrogate a multi-temperature free energy expression \eqref{generalized_free_energy_thm_2} that permits one to analyze isolated segments of the polymer.
This technique is summarized in Section \ref{proof_sketch} and may be of independent interest.

\subsection{The model: directed polymers in Gaussian environment} 
Let $\sigma = (\sigma_i)_{i\geq0}$ denote simple random walk on $\Z^d$ starting at the origin.
We will write $P$ to denote the law of $\sigma$ in the space
\eeq{ \label{sigma_def}
\Sigma \coloneqq \{\sigma = (\sigma_i)_{i\geq0} \in (\Z^d)^{\{0,1,\dots\}}:\, \sigma_0 = 0, \|\sigma_i - \sigma_{i-1}\| = 1 \text{ for all $i\geq1$}\},
}
equipped with the standard cylindrical sigma-algebra.
Expectation with respect to $P$ will be denoted by $E(\cdot)$.

Next let $\vc g = (g(i,x) :\, i\geq1,x\in\Z^d)$ be a collection of i.i.d.~standard normal random variables, supported on some probability space $(\Omega,\FF,\P)$.
Expectation according to $\P$ will be denoted by $\E(\cdot)$.
The infinite collection $\vc g$ is called the \textit{disorder} or \textit{random environment}, and defines a family of Hamiltonians on $\Sigma$,
\eq{
H_N(\sigma) \coloneqq \sum_{i=1}^N g(i,\sigma_i), \quad N\geq1.
}
At inverse temperature $\beta\geq0$, the associated Gibbsian \textit{polymer measure} is given by
\eeq{ \label{mu_def}
\mu_{N,\beta}(\dd\sigma) \coloneqq \frac{1}{Z_N(\beta)}\e^{\beta H_N(\sigma)}\ P(\dd\sigma),
}
where
%\eq{
$Z_N(\beta) \coloneqq E(\e^{\beta H_N(\sigma)})$
%}
is the random normalization constant known as the \textit{partition function}.
As a function of $N$, the partition function grows exponentially with a limiting rate $p(\beta)$ called the \textit{free energy}.

\begin{theirthm} \label{free_energy_thm}
\textup{\cite[Rmk.~2.1]{comets17}} 
There exists a function $p \colon [0,\infty)\to\R$ such that
\eeq{ \label{free_energy_thm_1}
\lim_{N\to\infty} \frac{\E \log Z_N(\beta)}{N} = p(\beta) \quad \text{for all $\beta\geq0$.}
}
Moreover, for any $u>0$ we have
\eeq{ \label{free_energy_thm_2}
\P\Big(\Big|\frac{\log Z_N(\beta)}{N} - \frac{\E\log Z_N(\beta)}{N}\Big| \geq u\Big) \leq 2\exp\Big(\frac{-Nu^2}{2\beta^2}\Big).
}
Consequently, the following limit holds for every $\beta\geq0$:
\eeq{ \label{free_energy_thm_3}
\lim_{N\to\infty} \frac{\log Z_N(\beta)}{N} = p(\beta) \quad \text{$\P$-$\mathrm{a.s.}$ and in $L^\alpha(\P)$ for all $\alpha\in[1,\infty)$}.
}
\end{theirthm}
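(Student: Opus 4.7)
The plan is to address the three claims of the theorem in the order they are stated: (a) existence of the free energy limit $p(\beta)$ in \eqref{free_energy_thm_1} via superadditivity of $\E\log Z_N(\beta)$; (b) the Gaussian concentration bound \eqref{free_energy_thm_2} via a Lipschitz estimate; and (c) the almost sure and $L^\alpha$ convergence in \eqref{free_energy_thm_3} as a quick consequence of (a) and (b).

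For (a), my approach will be to decompose a polymer of length $N+M$ at time $N$. By the Markov property of simple random walk applied at $\sigma_N$,
\[
Z_{N+M}(\beta)\;=\;E\bigl[\e^{\beta H_N(\sigma)}\, Z^{(N,\sigma_N)}_M(\beta)\bigr],
\]
where $Z^{(N,y)}_M(\beta)$ denotes the partition function of an independent walk started at $y$ using only the disorder $\{g(N+i,z):1\leq i\leq M,\ z\in\Z^d\}$. I will divide by $Z_N(\beta)$ so that the prefactor becomes the first-strip Gibbs measure $\mu_{N,\beta}$, and apply Jensen's inequality to obtain
\[
\log\frac{Z_{N+M}(\beta)}{Z_N(\beta)}\;\geq\;\int \log Z^{(N,\sigma_N)}_M(\beta)\,\dd\mu_{N,\beta}(\sigma).
\]
Upon taking $\E$ of both sides, two ingredients collapse the right-hand side: (i) the disorder defining $Z^{(N,y)}_M(\beta)$ is independent of that defining $\mu_{N,\beta}$, and (ii) spatial translation invariance of the environment forces $\E \log Z^{(N,y)}_M(\beta) = \E \log Z_M(\beta)$ for every $y\in\Z^d$. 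Together these give superadditivity $\E\log Z_{N+M}(\beta)\geq \E\log Z_N(\beta)+\E\log Z_M(\beta)$, and combined with the annealed upper bound $\E\log Z_N(\beta)\leq \log\E Z_N(\beta)=N\beta^2/2$, Fekete's lemma will produce the limit $p(\beta)=\sup_N \E\log Z_N(\beta)/N\in[0,\beta^2/2]$.

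For (b), I will view $\log Z_N(\beta)$ as a function of the finite standard Gaussian vector $\vc g_N=(g(i,x):1\leq i\leq N,\ x\in\Z^d\text{ accessible})$. A direct differentiation gives $\partial\log Z_N(\beta)/\partial g(i,x)=\beta\,\mu_{N,\beta}(\sigma_i=x)$, so the squared gradient norm is $\beta^2\sum_{i,x}\mu_{N,\beta}(\sigma_i=x)^2\leq N\beta^2$. Hence $\log Z_N(\beta)$ is $\beta\sqrt{N}$-Lipschitz in $\vc g_N$, and the classical Gaussian concentration inequality (Borell--TIS) immediately yields $\P(|\log Z_N(\beta)-\E\log Z_N(\beta)|>Nu)\leq \exp(-Nu^2/(2\beta^2))$, which is \eqref{free_energy_thm_2}.

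For (c), the tail in \eqref{free_energy_thm_2} is summable in $N$ for each fixed $u>0$, so Borel--Cantelli combined with (a) will give the almost sure convergence $\log Z_N(\beta)/N\to p(\beta)$. The same sub-Gaussian tail gives uniform control of all moments of $\log Z_N(\beta)/N$, hence uniform integrability of $\{|\log Z_N(\beta)/N|^\alpha\}$ for every $\alpha\in[1,\infty)$, upgrading the a.s.\ convergence to $L^\alpha$ convergence. I expect the main obstacle to be the superadditivity step: the Jensen--independence maneuver requires careful bookkeeping so that the first-strip Gibbs measure $\mu_{N,\beta}$ (measurable with respect to $\{g(i,\cdot):i\leq N\}$) stays cleanly independent of the second-strip partition function, and the translation-invariance reduction $\E\log Z^{(N,y)}_M(\beta)=\E\log Z_M(\beta)$ must be applied pointwise in $y$ before the Gibbs average is taken. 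Once superadditivity is in hand, the remainder amounts to textbook Gaussian concentration and a routine uniform-integrability argument.
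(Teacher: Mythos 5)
Your proposal is the standard route to this result and, up to one small constant, it is correct: superadditivity of $\E\log Z_N(\beta)$ via the Markov decomposition at time $N$, Jensen's inequality, independence of the two disorder strips, and translation invariance, followed by Fekete's lemma together with the annealed upper bound $\E\log Z_N(\beta)\leq N\beta^2/2$ to define $p(\beta)$; then the Lipschitz computation $\partial_{g(i,x)}\log Z_N(\beta)=\beta\,\mu_{N,\beta}(\sigma_i=x)$, giving $\|\nabla\log Z_N(\beta)\|_2\leq\beta\sqrt{N}$, and Gaussian concentration; then Borel--Cantelli and uniform integrability for \eqref{free_energy_thm_3}. This is essentially the argument in the cited source, so the method matches.

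The one thing to flag: a direct application of Borell--TIS to an $L$-Lipschitz function gives the one-sided bound $\P(f-\E f>t)\leq \exp(-t^2/(2L^2))$; the two-sided bound on $|f-\E f|$ that you write down carries a prefactor of $2$, since you must union-bound the two tails. Your computation therefore yields
\[
\P\Big(\Big|\tfrac{\log Z_N(\beta)}{N}-\tfrac{\E\log Z_N(\beta)}{N}\Big|>u\Big)\leq 2\exp\Big(\tfrac{-Nu^2}{2\beta^2}\Big),
\]
not the stated inequality with prefactor $1$. The Lipschitz argument alone does not remove that factor. This is harmless for part (c) and for every use of \eqref{free_energy_thm_2} later in the paper (all such uses are only up to constants), but as a matter of accuracy you should either keep the factor of $2$ or supply an additional argument specific to the lower tail. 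Aside from that constant, the proof is sound; in particular your bookkeeping on the superadditivity step (integrating the inner Gaussian expectation pointwise in $\sigma_N$ using translation invariance, before taking the Gibbs average) is exactly the care that step requires.
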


Given this paper's methods, the following observation will help avoid some technical concerns.

%
%\begin{remark}
%When $\beta=0$, we have $Z_N(0) = 1$ deterministically, in which case \eqref{free_energy_thm_1} and \eqref{free_energy_thm_3} hold trivially with $p(0) = 0$.
%Also, because of the symmetry of the Gaussian distribution, \eqref{free_energy_thm_1}--\eqref{free_energy_thm_3} are equally valid for negative values of $\beta$.
%\end{remark}

\begin{remark} \label{independence_remark}
\textit{A priori}, the validity of \eqref{free_energy_thm_3} might depend on the fact that the random variables defining $H_N(\cdot)$ also appear in $H_M(\cdot)$ for $M\geq N$.
On the contrary, because of \eqref{free_energy_thm_2}, the statement \eqref{free_energy_thm_3} is still true if one takes
\eeq{ \label{new_H}
H_N(\sigma) = \sum_{i=1}^N g_N(i,\sigma_i),
}
where now $\vc g = (g_N(i,x) :\, 1\leq i\leq N, x\in\Z^d)$ is an i.i.d.~collection even across $N$.
Henceforth, we will take \eqref{new_H} as the definition of $H_N$.
The distribution of $\mu_{N,\beta}$ does not change; only the joint law of $(\mu_{N,\beta})_{N\geq1}$ is affected, and we will not be concerned with the latter object.
\end{remark}

We will be interested in the relationship between $p(\beta)$ and the \textit{overlap function},
\eq{
R(\sigma^1,\sigma^2) \coloneqq \frac{1}{N}\sum_{i=1}^N \one_{\{\sigma_i^1=\sigma_i^2\}}, \quad \sigma^1,\sigma^2\in\Sigma,
}
where the dependence of $R(\cdot,\cdot)$ on $N$ is understood.
The degree to which the model localizes can be measured by the typical size of $R(\sigma^1,\sigma^2)$ when $\sigma^1$ and $\sigma^2$ are sampled independently from $\mu_{N,\beta}$.
For instance, if $\beta = 0$, then $\mu_{N,0}$ returns the simple random walk $P$, and classical results give the overlap's rate of decay:
\eq{
R(\sigma^1,\sigma^2)  \asymp \begin{cases} N^{-1/2} &\text{if }d=1, \\
N^{-1}\log N &\text{if }d=2,\\
N^{-1} &\text{if }d\geq 3.
\end{cases}
}
Considering that $R(\sigma^1,\sigma^2)\to0$ as $N\to\infty$ in any one of these cases, it is a striking fact that when disorder is introduced at sufficiently large $\beta>0$, this overlap remains bounded away from 0 (in various senses made precise in Section \ref{background}).
As suggested earlier, the free energy provides an understanding of this dichotomy as a phase transition between high and low temperatures.
In the following statements, the function $\beta^2/2$ appears because it is the logarithmic moment generating function of the standard normal distribution.

\begin{theirthm} \label{critical_temperature}
\textup{\cite[Thm.~3.2]{comets-yoshida06}}
There exists a critical inverse temperature $\beta_\cc = \beta_\cc(d) \in [0,\infty)$ such that
\eq{
0 \le \beta \leq \beta_{\mathrm{c}} \quad &\implies \quad p(\beta) = \beta^2/2, \\
\beta > \beta_{\mathrm{c}} \quad &\implies \quad p(\beta) < \beta^2/2.
}
\end{theirthm}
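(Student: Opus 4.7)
The plan is to set $\beta_\cc \coloneqq \sup\{\beta \geq 0 : p(\beta) = \beta^2/2\}$ and verify that this set equals $[0,\beta_\cc]$ with $\beta_\cc < \infty$. The starting point is the annealed upper bound. Under \eqref{new_H}, for each fixed $\sigma$ the variables $g_N(i,\sigma_i)$ are i.i.d.\ standard normal, so $\E Z_N(\beta) = \e^{N\beta^2/2}$; Jensen's inequality then yields $p(\beta) \leq \beta^2/2$ for every $\beta \geq 0$, with equality at $\beta = 0$, giving $0 \in \{\beta : p(\beta) = \beta^2/2\}$.

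The heart of the argument is to show that $\beta \mapsto f_N(\beta) - \beta^2/2$ is non-increasing on $[0,\infty)$ at the level of finite $N$, where $f_N(\beta) \coloneqq N^{-1}\E\log Z_N(\beta)$; since monotonicity passes to pointwise limits, this transfers to $p(\beta) - \beta^2/2$. Differentiating under the expectation gives $f_N'(\beta) = N^{-1}\sum_{i,x}\E[g_N(i,x)\,\mu_{N,\beta}(\sigma_i = x)]$. A short computation yields $\partial_{g_N(i,x)}\mu_{N,\beta}(\sigma_i = x) = \beta\,\mu_{N,\beta}(\sigma_i = x)\bigl(1 - \mu_{N,\beta}(\sigma_i = x)\bigr)$, after which the Gaussian integration by parts identity $\E[X\phi(X)] = \E[\phi'(X)]$ (for $X$ standard normal) collapses each summand to $\beta\,\E[\mu(1-\mu)]$. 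Summing over $x$ using $\sum_x \mu_{N,\beta}(\sigma_i = x)^2 = \mu_{N,\beta}^{\otimes 2}(\sigma_i^1 = \sigma_i^2)$, and then summing over $i$, recognizes the result as $\beta\bigl(1 - \E\langle R(\sigma^1,\sigma^2)\rangle_{\mu_{N,\beta}^{\otimes 2}}\bigr)$. Thus $\bigl(f_N - \beta^2/2\bigr)'(\beta) = -\beta\,\E\langle R\rangle \leq 0$ on $[0,\infty)$, as required.

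Finally, to see that $\beta_\cc$ is finite, I would use that $P$ is supported on at most $(2d)^N$ paths, so $\log Z_N(\beta) \leq \beta\max_\sigma H_N(\sigma)$. Each $H_N(\sigma)$ is centered Gaussian with variance $N$, and the usual bound on the expected maximum of finitely many Gaussians gives $\E\max_\sigma H_N(\sigma) \leq \sqrt{2N\log(2d)^N} = N\sqrt{2\log(2d)}$. Hence $p(\beta) \leq \beta\sqrt{2\log(2d)}$, which is strictly less than $\beta^2/2$ once $\beta > 2\sqrt{2\log(2d)}$. Combining monotonicity with the continuity of $p$ (convex as a pointwise limit of the convex functions $f_N$, hence continuous on $[0,\infty)$) yields $\{\beta \geq 0 : p(\beta) = \beta^2/2\} = [0,\beta_\cc]$ for some $\beta_\cc \in [0,\infty)$.

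The main point requiring care is the interchange $\partial_\beta \E = \E\,\partial_\beta$ in the derivation of $f_N'(\beta)$; dominated convergence with the bound $|\langle H_N\rangle_{\mu_{N,\beta}}| \leq \max_\sigma|H_N(\sigma)|$ and standard Gaussian moment estimates handles this without essential difficulty. Everything else is direct calculation or a standard Gaussian tail estimate.
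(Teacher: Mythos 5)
Your proof is correct, but note that the paper itself does not prove this theorem: it is cited as Theorem~\ref{critical_temperature} from Comets--Yoshida, so there is no in-paper proof to compare against directly. Your argument splits cleanly into (i) the annealed upper bound $p\leq\beta^2/2$ via Jensen, (ii) the monotonicity $\frac{\dd}{\dd\beta}\bigl(f_N(\beta)-\beta^2/2\bigr) = -\beta\,\E\langle R\rangle_{\mu_{N,\beta}^{\otimes2}}\leq 0$ via Gaussian integration by parts, (iii) finiteness of $\beta_\cc$ from the $\log Z_N(\beta)\leq\beta\max_\sigma H_N(\sigma)$ bound together with the expected maximum of $(2d)^N$ Gaussians of variance $N$, and (iv) the interval structure from convexity/continuity of $p$. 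All four steps are sound, and the dominated-convergence justification for $\partial_\beta\E=\E\,\partial_\beta$ is handled correctly.

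Two remarks on how your route relates to the surrounding literature. First, the integration-by-parts identity you derive is exactly the finite-$N$ version of the overlap formula \eqref{expected_R_limit} that the paper quotes (attributed to Carmona--Hu and Comets); you are essentially re-deriving it rather than using a separate tool, which is entirely appropriate and keeps the argument self-contained. Second, the cited Comets--Yoshida result is stated for general i.i.d.\ disorder with finite exponential moments, not only Gaussian, so their monotonicity argument cannot rely on Gaussian integration by parts and proceeds through a comparison argument adapted to the normalized partition-function martingale. Your version exploits the Gaussian structure to give a shorter, more explicit derivative formula, which is arguably cleaner in the setting of this paper but would not transfer verbatim to the general-disorder theorem as cited. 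If you want the argument to match the level of generality of the cited reference you would need to replace step (ii); if the Gaussian case suffices (as it does here), your proof is a perfectly good alternative.

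One small stylistic point: in step (iv) it is worth saying explicitly that $g(\beta)\coloneqq p(\beta)-\beta^2/2$ is continuous (because $p$ is a pointwise limit of convex functions, hence convex, hence continuous on the open half-line, and finite-valued at $0$), non-increasing, $\leq 0$, and $g(0)=0$, from which $\{g=0\}=[0,\beta_\cc]$ follows immediately; you gesture at this but it deserves one explicit sentence since closedness of the zero set is exactly what makes the $\leq\beta_\cc$ (as opposed to $<\beta_\cc$) part of the claim hold.
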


The high temperature phase $0\leq\beta<\beta_\cc$ is thought to indicate a polymer measure still resembling simple random walk; a result to this effect is \cite[Thm.~1.2]{comets-yoshida06}.
On the other hand, in the low temperature phase $\beta>\beta_\cc$, the polymer measure is expected to be so attracted by favorable regions in the random environment that it concentrates near them.
The question then is how to relate the condition $p(\beta) < \beta^2/2$ to this localization, as measured by the overlap function.

%\begin{remark}
%Since simple random walk is recurrent in dimensions $1$ and $2$, one would expect non-vanishing overlap to more readily appear in these cases.
%Rather suggestively, Comets and Vargas \cite{comets-vargas06} showed $\beta_\cc(1) = 0$, and Lacoin \cite{lacoin10} proved $\beta_\cc(2) = 0$.
%\end{remark}

\begin{remark} \label{rmk_conj}
The function $\log Z_N(\cdot)$ is a logarithmic moment generating function and thus convex.
It thus follows from \eqref{free_energy_thm_1} that $p(\cdot)$ is also convex and hence differentiable almost everywhere.
It is believed (see \cite[Conj.~6.1]{comets17}) that there are actually no points of non-differentiability, and moreover that $p'(\beta) < \beta$ for all $\beta > \beta_\cc$.
If this is true, then $p'(\beta) < \beta$ is equivalent to the low-temperature condition $p(\beta) < \beta^2/2$.
\end{remark}

\subsection{Background} \label{background}
The model we have defined makes sense if $\vc g$ is replaced by any family of disorder variables.
The i.i.d.~assumption is completely standard, and it is only out of methodological necessity that we have assumed Gaussianity.
The Gaussian case happens to be one of the few for which some version of path localization has been rigorously established, but the phenomenon is anticipated in much greater generality.

The first path localization result for \eqref{mu_def} appeared in \cite[Thm.~6.1]{comets17}, although the relevant computation was already present in the work of Carmona and Hu \cite[Lem.~7.1]{carmona-hu02}.
Adopting a Gaussian-integration-by-parts idea used in continuous models \cite{comets-cranston13,comets-yoshida13} and earlier in the spin glass literature \cite{aizenman-lebowitz-ruelle87,comets-neveu95,talagrand06III,panchenko08}, one can show\footnote{The identity \eqref{expected_R_limit} is verified in full, for a very general Gaussian disordered system, in \cite[Cor.~3.10]{bates-chatterjee20II}.} that if $p(\cdot)$ is differentiable at $\beta$, then
\eeq{ \label{expected_R_limit}
\lim_{N\to\infty} \E\bigg[\int_{\Sigma\times\Sigma} R(\sigma^1,\sigma^2)\ \mu_{N,\beta}^{\otimes 2}(\dd\sigma^1,\dd\sigma^2)\bigg] = 1 - \frac{p'(\beta)}{\beta}.
}
In particular, when $p'(\beta) < \beta$ (by Remark \ref{rmk_conj}, this is the presumed characterization of low temperature), the average overlap between independent polymer paths has a nonzero limiting expectation.
In other words, if $\sigma^1$ and $\sigma^2$ are sampled independently from $\mu_{N,\beta}$, then there is a nonzero chance that their fractional overlap $R(\sigma^1,\sigma^2)$ is at least some fixed positive number.
For continuous models, analogous results can be found in \cite{comets-cranston13,comets-yoshida13} as well as \cite[Sec.~5.5]{comets-cosco??}.

For as elegantly simple as \eqref{expected_R_limit} is to prove, it only tells us that the previous sentence is true on an event of nonzero $\P$-probability.
One should like said probability to be asymptotically equal to $1$, meaning the specific realization of the disorder is irrelevant. Such was the advancement provided by Chatterjee \cite{chatterjee19}, for sufficiently large $\beta$ and a certain class of bounded random environments.
In \cite{bates-chatterjee20II}, Bates and Chatterjee proved an analogous (but less quantitative) statement in the Gaussian case, and then bootstrapped that result to the following, stronger one.
The key feature is that the number $J$ of distinguished paths has no dependence on the polymer length $N$, although the distinguished paths themselves, called $\sigma^1,\dots,\sigma^J$, are random and do depend on $N$.

\begin{theirthm}  \label{previous_thm_polymer}
\textup{\cite[Thm.~1.6]{bates-chatterjee20II}}
Assume $\beta>0$ is a point of differentiability for $p(\cdot)$ with $p'(\beta) < \beta$.
Then for every $\eps > 0$, there exist integers $J = J(\beta,\eps)$, $N_* = N_*(\beta,\eps)$ and a number $\delta = \delta(\beta,\eps)>0$ such that the following is true for all $N\geq N_*$.
With $\P$-probability at least $1-\eps$, there are paths $\sigma^1,\dots,\sigma^J\in\Sigma$ satisfying
\eq{
\mu_{N,\beta}\bigg(\bigcup_{j=1}^J \big\{\sigma\in\Sigma:\, R(\sigma^{j},\sigma)\geq\delta\big\}\bigg) \geq 1 - \eps.
}
\end{theirthm}

%Whereas the results of \cite{comets17,chatterjee19} showed a sample $\sigma$ from the Gibbs measure stood a positive chance of having at least $\delta$ expected overlap with a second sample, Theorem \ref{previous_thm_polymer} said this chance was essentially $1$.
In this sense, $\mu_{N,\beta}$ concentrates on highly frequented paths and places no mass elsewhere.
The distinguished trajectories $\sigma^1,\dots,\sigma^J$ (which are random and depend on $N$) might be called ``favorite paths'', representing the preferred regions or ``favorite corridors'' of the polymer measure $\mu_{N,\beta}$.

While this brief overview has mentioned essentially all that has been proved about path localization (at least for the discrete model considered in this paper), much more is known about localization of the endpoint distribution $\mu_{N,\beta}(\sigma_N \in \cdot)$.
The state of the art goes well beyond the Gaussian case or even simple random walks (on the latter point, see \cite{bates18,bakhtin-seo20,viveros?} and references therein), and there is even a one-dimensional exactly solvable model \cite{seppalainen12} admitting an explicit limiting law for the endpoint distribution \cite{comets-nguyen16}.
The reader is referred to \cite{bates-chatterjee20} for a review of the literature.

Finally, a somewhat orthogonal direction of work considers directed polymers in heavy-tailed random environments, mostly in $d=1$.
In this setting, the degree of localization is much greater (\textit{e.g.}~\cite[Thm.~2.1]{auffinger-louidor11}), and so the interesting questions arise from taking $\beta = \beta_N \to 0$, where the rate of decay is determined by the index of the heavy tail \cite{dey-zygouras16,torri16,berger-torri19,berger-lacoin21}.
Further discussion can be found in \cite[Sec.~6.4]{comets17}; see also \cite{vargas07}.

\subsection{Main result} \label{main_result}
The goal of this article is to go beyond the single statistic $R(\sigma^1,\sigma^2)$.
Although it serves as a natural gauge for localization, it does little to illuminate the geometry of localized polymers.
For instance, if we know $R(\sigma^1,\sigma^2)$ is bounded away from zero, can we say something about the set of $i \in \{1,2,\dots,N\}$ for which $\sigma_i^1 = \sigma_i^2$?
Our main result addresses this question.

For integers $a\leq b$, let $\llbrack a,b\rrbrack$ denote the integer interval $\{a,a+1,\dots,b\}$.
Given $1\leq a\leq b\leq N$, consider the \textit{restricted overlap},
\eq{
R^{\llbrack a,b\rrbrack}(\sigma^1,\sigma^2) \coloneqq \frac{1}{b-a+1}\sum_{i=a}^b \one_{\{\sigma^1_i = \sigma^2_i\}}, \quad \sigma^1,\sigma^2\in\Sigma.
}
By examining these restricted overlaps, we will prove that the intersection set mentioned above is dense in $\llbrack 1,N\rrbrack$.
Mirroring the language of Theorem \ref{previous_thm_polymer}, we make this assertion precise as follows. 

\begin{thm} \label{main_thm}
Assume $\beta>0$ is a point of differentiability for $p(\cdot)$ with $p'(\beta) < \beta$.
Then for every $\eps > 0$, there exist integers $J = J(\beta,\eps)$, $N_* = N_*(\beta,\eps)$ and a number $\delta = \delta(\beta,\eps)>0$ such that the following is true for all $N\geq N_*$.
With $\P$-probability at least $1-\eps$, there are paths $\sigma^1,\dots,\sigma^J\in\Sigma$ satisfying
\eq{
\mu_{N,\beta}\bigg(\bigcup_{j=1}^J \big\{\sigma\in\Sigma:\, R^{\llbrack a,b\rrbrack}(\sigma^{j},\sigma)\geq\delta\  \mathrm{whenever}\ b-a+1 \geq \eps N\big\}\bigg) \geq 1 - \eps.
}
\end{thm}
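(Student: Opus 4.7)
My plan is to combine the global localization of Theorem \ref{previous_thm_polymer} with a multi-temperature perturbation of the free energy that transfers its conclusions to arbitrary sub-intervals of linear size. The argument proceeds in three stages: discretization, per-interval localization, and synthesis.

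\textbf{Stage 1 (Discretization).} I would first reduce the assertion to a finite family $\mathcal{I}_\eps$ of ``canonical'' sub-intervals of cardinality $O(\eps^{-2})$, chosen so that every $\llbrack a,b\rrbrack$ with $b-a+1 \geq \eps N$ contains some $I \in \mathcal{I}_\eps$ covering at least half its length. By the bound $R^{\llbrack a,b\rrbrack} \geq R^I \cdot |I|/(b-a+1)$, it then suffices to establish the conclusion with $\llbrack a,b\rrbrack$ restricted to $\mathcal{I}_\eps$ (and $\delta$ replaced by $2\delta$).

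\textbf{Stage 2 (Per-interval localization).} For each canonical $I \in \mathcal{I}_\eps$, define the multi-temperature partition function
\begin{equation*}
Z_N^I(\beta, \gamma) := E\bigg[\exp\bigg(\beta \sum_{i \notin I} g_N(i, \sigma_i) + \gamma \sum_{i \in I} g_N(i, \sigma_i)\bigg)\bigg],
\end{equation*}
which reduces to $Z_N(\beta)$ at $\gamma = \beta$. Gaussian integration by parts in the variable $\gamma$ yields
\begin{equation*}
\frac{\partial}{\partial \gamma} \E\bigg[\frac{\log Z_N^I(\beta, \gamma)}{N}\bigg]\bigg|_{\gamma=\beta} = \beta \cdot \frac{|I|}{N} \cdot \Big(1 - \E\big[\langle R^I \rangle_{\mu_{N,\beta}^{\otimes 2}}\big]\Big).
\end{equation*}
Using the Markov property of the reference walk to split $Z_N^I(\beta,\gamma)$ into three sub-partition functions (for the segments before $I$, on $I$, and after $I$), one expects that the limit $\tilde p(\beta, \gamma; \eps') := \lim_N N^{-1} \E[\log Z_N^I(\beta, \gamma)]$, where $\eps' = |I|/N$, equals $(1 - \eps') p(\beta) + \eps' p(\gamma)$ up to sub-leading boundary corrections. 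Differentiating and matching with the integration-by-parts identity yields $\lim_N \E[\langle R^I \rangle_{\mu_{N,\beta}^{\otimes 2}}] = 1 - p'(\beta)/\beta > 0$, uniformly over $I \in \mathcal{I}_\eps$. With this lower bound, I would apply the bootstrap of \cite{bates-chatterjee???} underlying Theorem \ref{previous_thm_polymer}, but with $R^I$ in place of $R$, to produce (with high probability) a finite collection of paths $\sigma^{I,1}, \ldots, \sigma^{I, J_0}$ covering $\mu_{N,\beta}$ in the $R^I$-sense at a threshold $\delta_0$ that, by further iteration if needed, can be taken to exceed $1/2$.

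\textbf{Stage 3 (Synthesis via min-overlap).} From per-interval favorite paths, the elementary inequality $|A \cap B| \geq |A| + |B| - |I|$ implies $R^I(\sigma^1, \sigma^2) \geq 2\delta_0 - 1$ whenever both $\sigma^1$ and $\sigma^2$ have $R^I$-overlap at least $\delta_0$ with a common $\sigma^{I, j}$. A union-bound argument over $\mathcal{I}_\eps$ then shows that $\E\big[\mu_{N,\beta}^{\otimes 2}\big(R^I(\sigma^1, \sigma^2) \geq 2\delta_0 - 1\text{ for all } I \in \mathcal{I}_\eps\big)\big]$ is bounded below by a positive constant. Setting $\bar R(\sigma^1, \sigma^2) := \min_{I \in \mathcal{I}_\eps} R^I(\sigma^1, \sigma^2)$ and applying the bootstrap of \cite{bates-chatterjee???} a final time with $\bar R$ in place of $R$ extracts the desired finite set of paths $\sigma^1, \ldots, \sigma^J$ having the required simultaneous restricted-overlap property, completing the proof after undoing the Stage~1 reduction.

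The main obstacle lies in Stage 2: the bootstrap of \cite{bates-chatterjee???} is tailored to the full overlap $R$, and its extension to $R^I$ requires both a rigorous derivation of the limit $\lim_N \E[\langle R^I \rangle] = 1 - p'(\beta)/\beta$ (with careful treatment of the sub-leading boundary corrections incurred when decomposing $Z_N^I$ via the Markov property) and a re-examination of the non-local machinery to confirm that it continues to function when the overlap is measured only on a fixed linear-size segment.
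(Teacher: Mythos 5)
Your Stage~1 discretization and Stage~2 free-energy perturbation track the paper's Sections~2 and~4 in spirit, but Stage~3 contains a fatal gap that Stage~2 quietly sets up: the claim that the per-interval threshold $\delta_0$ ``by further iteration if needed, can be taken to exceed $1/2$.'' This is not available. Theorem~\ref{previous_thm_polymer} (and the underlying Theorem~1.2 of \cite{bates-chatterjee???}) produces \emph{some} $\delta(\beta,\eps)>0$, with no control pushing it toward~$1$; iterating the bootstrap does not tighten the overlap level, and there is no reason to expect the Gibbs overlap on a linear segment to concentrate near~$1$ at any finite $\beta$ (that would already give near-uniqueness of the favorite corridor, a far stronger statement than is known). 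Once $\delta_0\leq 1/2$, your inclusion--exclusion bound $R^I(\sigma^1,\sigma^2)\geq 2\delta_0-1$ is vacuous, so the min-overlap synthesis and the final ``bootstrap with $\bar R$'' step collapse. (Separately, $\bar R=\min_I R^I$ is not the covariance kernel of any Gaussian Hamiltonian, so the abstract bootstrap would not apply to it even if the lower bound survived.) The paper avoids all of this by never trying to get large overlap: given per-interval favorite-path families, it \emph{concatenates} them. Section~3 builds, inductively over the $L$ regular subintervals, an $O(1)$-size set of spliced paths $\PP_k^{(\ell)}(\sigma',\sigma'')$ so that any $\sigma$ with overlap $\geq\delta$ against possibly different favorites in consecutive blocks has overlap $\geq\delta^2/104$ against a single spliced path in both blocks (Claim~\ref{inductive_reduction}). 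This works for arbitrarily small $\delta$, which is exactly what the bootstrap supplies.

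There is also a second, more technical gap in Stage~2. You acknowledge but do not resolve that applying the bates--Chatterjee machinery ``with $R^I$ in place of $R$'' is not a drop-in substitution: the abstract theorem requires the Gibbs measure to be a tilt of a \emph{fixed} reference measure by a Gaussian field whose covariance \emph{is} the overlap. For $R^I$ this forces the Hamiltonian to be $\sum_{i\in I}g(i,\sigma_i)$, and then the reference measure cannot be $P$; it must absorb the disorder outside $I$. The paper's resolution is to condition on $\wh{\vc g}^{(\ell)}$, take the random reference measure $\wh\mu_{N,\beta}^{(\ell)}$, and verify the free-energy hypothesis \eqref{free_energy_assumption} conditionally (Proposition~\ref{conditional_free_energy_thm}), using the multi-temperature concentration of Theorem~\ref{generalized_free_energy_thm}. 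Your Gaussian-integration-by-parts identity for $\partial_\gamma\,\E N^{-1}\log Z_N^I(\beta,\gamma)$ and the splitting of $Z_N^I$ by the Markov property are in the right direction and resemble the paper's Theorem~\ref{generalized_free_energy_thm}, but the proposal stops short of turning this into a statement the abstract localization theorem can consume. Until both issues are repaired --- replacing the $\delta_0>1/2$ synthesis with a concatenation-type argument, and carrying out the conditional random-reference-measure reduction --- the argument does not close.
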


So at low temperatures and up to negligible events, a sample from the polymer measure localizes around one of a fixed number of distinguished paths, and this localization takes place along the entire length of the path; that is, in every interval of size at least $\eps N$.
It is this latter part that is the contribution of the present article.

An important comment is that the statement of Theorem \ref{main_thm} concerns fixed $\beta$, which can be arbitrarily close to $\beta_\cc$.
Prior to this result, it was only possible to make guarantees about localization away from the polymer's endpoint if $\beta$ were sent to $\infty$
(\textit{c.f.}~\cite[Eq.~(4)]{comets-cranston13}, \cite[Thm.~3.3.3 \& 3.3.4]{comets-yoshida13}, and \cite[Sec.~9]{comets-cosco??}).
Indeed, because $p'(\cdot)$ is bounded (see \cite[Prop.~2.1(iii)]{comets17}), the identity \eqref{expected_R_limit} implies localization at a large fraction of times when $\beta$ is sufficiently large, but even this leaves open the possibility of some linearly sized interval on which localization does not occur.
Theorem \ref{main_thm} rules out this behavior at all low temperatures satisfying $p'(\beta) < \beta$.

A difficult and important question left open is the optimal dependence of $J$ and $\delta$ on $\eps$.
The proof method in this paper likely leads to very poor bounds.
Furthermore, can one prove localization on scales finer than linear?
This would be a necessary step toward showing that, at least in $d=1$, the polymer measure concentrates on a favorite corridor of $O(1)$ width; see \cite[Sec.~12.9(6)]{denhollander09}.

\subsection{Outline of proof} \label{proof_sketch}
Consider a weaker version of Theorem \ref{main_thm} in which we replace general subintervals $\llbrack a,b\rrbrack$ by
only ``regular'' subintervals: $(0,\frac{N}{L}]$, $(\frac{N}{L},\frac{2N}{L}]$, $\dots$, $(\frac{(L-1)N}{L},N]$, where $L$ is some large integer.
Our first observation is that Theorem \ref{main_thm} will be implied by this special case, which is stated as Theorem \ref{weak_main_thm}.
Indeed, for a given $\eps>0$, we can choose $L$ large enough that the regular subintervals are somewhat smaller than $\eps N$ and thus actually contained in any interval $I$ of size $\eps N$.
In this way, positive overlap in the regular subintervals will imply positive overlap in $I$.
%While this idea is simple enough, the proof requires some technical care to account for the fact that time is discrete, and $N$ may not be divisible by $L$.
This is the content of Section \ref{first_reduction}.

Another difficulty of Theorem \ref{main_thm} is that we demand the same distinguished path $\sigma^j$ to be used in every subinterval $\llbrack a,b\rrbrack$ of appropriate size.
The steps of the previous paragraph do not remove this requirement, and so our second reduction is to a version of Theorem \ref{weak_main_thm} that allows the index $j$ to depend on which regular subinterval $(\frac{(\ell-1)N}{N},\frac{\ell N}{L}]$ is considered.
This yet weaker result is stated as Theorem \ref{weaker_main_thm}, and the reduction argument is given in Section \ref{weaker_to_weak}.
The rough idea is to concatenate segments of distinguished paths in order to produce a larger set but still of $O(1)$ size, so that whenever a path $\sigma$ had intersected two distinct distinguished paths in consecutive subintervals, it will now intersect a single concatenated path in both subintervals.
This procedure can be carried out by demanding slightly less overlap in each regular subinterval.

Having made these reductions, we are left to prove that for each regular subinterval, one can (with high probability) find a bounded number of paths such that a sample from the Gibbs measure will (with high probability) have non-vanishing overlap in the given subinterval with at least one of these paths.
This statement could be easily proved if one were able to apply Theorem \ref{previous_thm_polymer} within each subinterval and then take an appropriate union bound.
The seeming obstruction is that the marginal of $\mu_{N,\beta}$ in a given subinterval is not a polymer measure of the same form as $\mu_{N,\beta}$.
Moreover, this marginal depends on the environment at all times, not just those within the subinterval.
Nevertheless, we can regard these marginals as polymer measures with respect to \textit{random} reference measures.
That is, we replace $P$ in \eqref{mu_def} by a random measure which, crucially, is determined entirely by the environment \textit{outside} the given subinterval.
Correspondingly, the Hamiltonian $H_N(\cdot)$ is replaced by a sum depending only on the environment inside said subinterval, the remaining disorder having been absorbed into the random reference measure.

In this setup, Theorem \ref{previous_thm_polymer} still does not quite apply because it assumes a specific reference measure $P$.
Fortunately, we can appeal to a more general result of \cite{bates-chatterjee20II} from which Theorem \ref{previous_thm_polymer} was derived.
We recall this general result as Theorem \ref{previous_thm} in Section \ref{weaker_proof}.
The only hypothesis to check is that $\mu_{N,\beta}$ still admits a limiting free energy with respect to the random reference measures.
To prove this fact, we introduce in Section \ref{multi_temp} a ``multi-temperature free energy'' that, as a special case, can ignore the disorder in a given subinterval.
Convergence of this generalized free energy is stated in Theorem \ref{generalized_free_energy_thm} and proved using modifications of standard techniques.
Finally, Theorem \ref{previous_thm} is invoked in Section \ref{weaker_proof}, where further technical issues are addressed en route to proving Theorem \ref{weaker_main_thm}.

\section{Reduction to regular subintervals} \label{first_reduction}

Given positive integers $N$ and $L$, let $0 = n_0(N)\leq n_1(N) \leq \cdots \leq n_L(N) = N$ be any sequence satisfying
\eeq{ \label{interval_condition}
n_\ell(N) - n_{\ell-1}(N) \in \Big\{\Big\lfloor \frac{N}{L} \Big\rfloor,\Big\lceil \frac{N}{L} \Big\rceil\Big\} \quad \text{for all $\ell=1,\dots,L$}.
}
We will think of $L$ as fixed throughout, and then such a sequence will be chosen and fixed for each $N$.
In other words, we partition the integer interval $\llbrack 1,N\rrbrack$ into $L$ parts of the form $\llbrack n_{\ell-1}(N)+1, n_\ell(N)\rrbrack$, whose sizes are as close to equal as possible.
For the sake of exposition, let us call these parts \textit{regular subintervals}.
The fractional overlap between $\sigma^1,\sigma^2\in\Sigma$ in the $\ell^\text{th}$ subinterval will be denoted by
\eeq{ \label{ell_overlap_def}
R^{(\ell)}(\sigma^1,\sigma^2) &\coloneqq R^{\llbrack n_{\ell-1}(N)+1,n_\ell(N)\rrbrack}(\sigma^1,\sigma^2) \\
&= \frac{1}{n_\ell(N)-n_{\ell-1}(N)} \sum_{i=n_{\ell-1}(N)+1}^{n_\ell(N)} \one_{\{\sigma^1_i=\sigma^2_i\}}.
}
When we are not varying $N$, we will simply write $n_\ell$ in place of $n_\ell(N)$. % and similarly abbreviate $R^{(\ell)}_N$ to just $R^{(\ell)}$.

The following special case of Theorem \ref{main_thm} will allow us to prove the general case.

\begin{thm} \label{weak_main_thm}
Assume $\beta>0$ is a point of differentiability for $p(\cdot)$ with $p'(\beta) < \beta$.
Then for every $\eps > 0$ and positive integer $L$, there exist integers $J = J(\beta,\eps,L)$, $N_* = N_*(\beta,\eps,L)$ and a number $\delta = \delta(\beta,\eps,L)>0$ such that the following is true for all $N\geq N_*$.
With $\P$-probability at least $1-\eps$, there are paths $\sigma^1,\dots,\sigma^J\in\Sigma$ satisfying
\eq{
\mu_{N,\beta}\bigg(\bigcup_{j=1}^J \bigcap_{\ell=1}^L\big\{\sigma\in\Sigma:\, R^{(\ell)}(\sigma^{j},\sigma)\geq\delta\big\}\bigg) \geq 1 - \eps.
}
\end{thm}

Given this result, we now show that Theorem \ref{main_thm} readily follows by identifying regular subintervals lying within a given interval $\llbrack a,b\rrbrack$ of size at least $\eps N$.

\begin{proof}[Proof of Theorem \ref{main_thm}]
Given $\eps\in(0,1]$, let $L\geq2$ be the integer satisfying $2/L \leq \eps < 2/(L-1)$.
Consider any subinterval $\llbrack a,b\rrbrack \subset \llbrack 1,N\rrbrack$ of size $b-a+1 \geq \eps N$.
Our choice of $L$ guarantees that $\llbrack n_{\ell-1}(N)+1,n_{\ell}(N)\rrbrack \subset \llbrack a,b\rrbrack$ for some $\ell \in \llbrack 1,L\rrbrack$.

Let us first address the case when $b-a+1\leq 2\eps N+1$.
In particular, assuming $N\geq L$, we have 
\eq{
b-a+1< \frac{4N}{L-1}+1 \leq \frac{8N}{L}+1 \leq \frac{9N}{L}.
}
Asymptotically we know $(n_{\ell}-n_{\ell-1}) \sim N/L$ as $N\to\infty$, but let us just use the trivial bounds
\eeq{ \label{trivial_bd_1}
\frac{N}{2L} 
\leq\Big\lfloor \frac{N}{L}\Big\rfloor
\leq n_{\ell}-n_{\ell-1}
\leq \Big\lceil \frac{N}{L}\Big\rceil
\leq \frac{2N}{L}
\quad \text{for all $N\geq L$}.
}
For any $\sigma^1,\sigma^2 \in\Sigma$, the inclusion $\llbrack n_{\ell-1}+1,n_{\ell}\rrbrack \subset \llbrack a,b\rrbrack$ now gives
\eq{
\frac{1}{n_{\ell}-n_{\ell-1}} \sum_{i=n_{\ell-1}+1}^{n_\ell} \one_{\{\sigma^1_i=\sigma^2_i\}}
\leq \frac{2L}{N} \sum_{i=a}^{b} \one_{\{\sigma^1_i=\sigma^2_i\}}
\leq \frac{18}{b-a+1}\sum_{i=a}^{b} \one_{\{\sigma^1_i=\sigma^2_i\}}.
}
Therefore, the following implication is true:
\eq{
R^{(\ell)}(\sigma^j,\sigma) \geq \delta \quad \implies \quad R^{\llbrack a,b\rrbrack}(\sigma^j,\sigma) \geq \delta/18.
}
If $b-a+1> 2\eps N+1$, then we can partition $\llbrack a,b\rrbrack$ into disjoint subintervals, all having sizes at least $\eps N$ but no larger than $2\eps N+1$.
The argument from above applies to each of these subintervals, and so
\eq{
R^{(\ell)}(\sigma^j,\sigma) \geq \delta \quad \text{for all $\ell=1,\dots,L$} \quad \implies \quad
R^{\llbrack a,b\rrbrack}(\sigma^j,\sigma) \geq \delta/18.
}
We can now deduce Theorem \ref{main_thm} from Theorem \ref{weak_main_thm} by replacing $\delta$ with $\delta/18$.
\end{proof}

\section{Reduction to independent subintervals} \label{weaker_to_weak}
We continue using the notation of regular subintervals introduced in Section \ref{first_reduction}, where the task of proving Theorem \ref{main_thm} was reduced to showing Theorem \ref{weak_main_thm}.
In this section, we reduce Theorem \ref{weak_main_thm} to the following, yet weaker statement. 
%Up to negligible probabilities, a path sampled from the Gibbs measure will have positive overlap with at least one of a constant number of distinguished paths, and this is true within each regular subinterval $\llbrack n_{\ell-1}+1,n_\ell\rrbrack$.

\begin{thm} \label{weaker_main_thm}
Assume $\beta>0$ is a point of differentiability for $p(\cdot)$ with $p'(\beta) < \beta$.
Then for every $\eps > 0$ and positive integer $L$, there exist integers $J = J(\beta,\eps,L)$, $N_* = N_*(\beta,\eps,L)$ and a number $\delta = \delta(\beta,\eps,L)>0$ such that the following is true for all $N\geq N_*$. 
With $\P$-probability at least $1-\eps$, there are paths $\sigma^1,\dots,\sigma^J\in\Sigma$ satisfying
\eq{
\mu_{N,\beta}\bigg(\bigcap_{\ell=1}^L\bigcup_{j=1}^J\big\{\sigma\in\Sigma:\, R^{(\ell)}(\sigma^{j},\sigma)\geq\delta\big\}\bigg) \geq 1 - \eps.
}
\end{thm}

Assuming this result, it remains to show that the same overlapping distinguished path can be taken in all $L$ regular subintervals (\textit{i.e.}~exchanging the intersection and the union displayed above).
We now argue that this can be done by increasing $J$ an $O(1)$ amount and choosing $\delta$ appropriately smaller.

%Recall that Theorem \ref{weaker_main_thm} identifies a finite list of distinguished paths so that in each regular subinterval $\llbrack n_{\ell-1}+1,n_\ell \rrbrack$, a random sample from the Gibbs measure $\mu_{N,\beta}$ will, with high probability, have overlap at least $\delta$ with at least one of these distinguished paths.
%To prove Theorem \ref{weak_main_thm}, we must argue that the same distinguished path can be taken in each subinterval by taking $O(1)$ more distinguished paths and choosing $\delta$ appropriately smaller.

\begin{proof}[Proof of Theorem \ref{weak_main_thm}]
Let $\eps > 0$ and a positive integer $L$ be given.
Then take $J$, $N_*$, and $\delta\in(0,1]$ as in Theorem \ref{weaker_main_thm} so that for all $N\geq N_*$, the following event occurs with $\P$-probability at least $1-\eps$.
There exists a random set of paths $\DDD_1 = \{\sigma^1,\dots,\sigma^J\}\subset\Sigma$ such that
\eeq{ \label{original_mu_statement}
\mu_{N,\beta}\bigg(\bigcap_{\ell=1}^L\bigcup_{\sigma'\in\DDD_1} \big\{\sigma\in\Sigma:\, R^{(\ell)}(\sigma',\sigma)\geq\delta\big\}\bigg) \geq 1 - \eps.
}
We will henceforth assume this event occurs.

Set $K \coloneqq \lceil 12/\delta \rceil$.
By possibly making $N_*$ larger, we may assume $N$ is such that
\eq{ %\label{N_assumption}
\lfloor N/L\rfloor \geq K.
}
We note for later that this assumption implies
\eeq{ \label{N_assumption_consequence}
\Big\lceil \frac{\lceil N/L\rceil}{K}\Big\rceil \stackref{trivial_bd_1}{\leq} \Big\lceil \frac{ 2N/L}{K}\Big\rceil \leq \frac{ 3N/L}{K},
}
and also that $\delta \leq 1$ implies
\eeq{ \label{delta_assumption_consequence}
\frac{12}{\delta} \leq K \leq \frac{12}{\delta}+1\leq \frac{13}{\delta}.
}
Given $\DDD_1$, let us perform the following inductive procedure.
%For $\ell = 1,\dots,L$, list the elements of $\DDD_{\ell-1}$ as $\sigma^1,\dots,\sigma^{K}$.

For each $\ell = 1,\dots,L$, partition the interval $\llbrack n_{\ell-1}+1,n_\ell\rrbrack$ into $K$ subintervals $\llbrack m^{(\ell)}_{k-1}+1,m^{(\ell)}_k\rrbrack$, $k=1,\dots,K$, whose sizes are as close to equal as possible.
That is, we choose a sequence
\eeqs{subsubinterval}{
n_{\ell-1} = m^{(\ell)}_0 \leq m^{(\ell)}_1 \leq \cdots \leq m^{(\ell)}_{K} = n_\ell \qquad \qquad \quad
\intertext{satisfying}
m^{(\ell)}_{k} - m^{(\ell)}_{k-1} \in \Big\{\Big\lfloor \frac{n_{\ell}-n_{\ell-1}}{K}\Big\rfloor,\Big\lceil \frac{n_{\ell}-n_{\ell-1}}{K}\Big\rceil\Big\} \quad \text{for all $k = 1,\dots,K$}.
}
%By \eqref{N_assumption}, we have $m_k^{(\ell)}-m_{k-1}^{(\ell)} \geq 1$ for each $k$, and so the following trivial bounds are valid:
%\eq{ %\label{trivial_bd_2}
%\frac{N}{4LK} \stackref{trivial_bd_1}{\leq} \frac{n_{\ell}-n_{\ell-1}}{2K} \leq m_k^{(\ell)}-m_{k-1}^{(\ell)} 
%\leq  \frac{2(n_{\ell}-n_{\ell-1})}{K}
% \stackref{trivial_bd_1}{\leq}   \frac{4N}{LK}.
%}
For $\ell \leq L-1$, set $\DDD_{\ell+1} = \DDD_{\ell} \cup \DDD_\ell^+$, where the supplementary set $\DDD_\ell^+$ is formed as follows.
Consider every ordered pair $(\sigma',\sigma'') \in \DDD_{\ell}\times\DDD_{\ell}$ with $\sigma'\neq \sigma''$.
For each $k \in \llbrack 1,K-1\rrbrack$, and each $t \in \llbrack n_{\ell}+1,n_{\ell+1}\rrbrack$, determine whether there exists a nearest-neighbor path between $\sigma'_{m_k^{(\ell)}}$ and $\sigma''_t$ consisting of exactly $t - m_k^{(\ell)}$ steps.
%If this is the case for some $t \in \llbrack n_{\ell}+1,n_{\ell+1}\rrbrack$, 
Let $t^{(\ell)}_k(\sigma',\sigma'')$ be the minimal such $t$; more formally,
\eeq{ \label{meeting_time_def}
t^{(\ell)}_k(\sigma',\sigma'') \coloneqq \inf\{t \in \llbrack n_{\ell}+1,n_{\ell+1}\rrbrack :\, \exists\, \sigma\in\Sigma, \sigma_{m_k^{(\ell)}} = \sigma'_{m_k^{(\ell)}}, \sigma_t = \sigma''_t\}.
}
If $t_k^{(\ell)}(\sigma',\sigma'')=\infty$, then we do nothing further.
Otherwise, there is some nearest-neighbor path $\wt\sigma$ connecting $\sigma'_{m_k^{(\ell)}}$ and $\sigma''_{t^{(\ell)}_k(\sigma',\sigma'')}$ in exactly $t^{(\ell)}_k(\sigma',\sigma'') - m_k^{(\ell)}$ steps. 
(If there are multiple such paths, then chose one according to some deterministic rule.)
We include the following concatenated path, which we call $\CC^{(\ell)}_k(\sigma',\sigma'')$, as an element of $\DDD_\ell^+$ (see Figure \ref{figure_1}):
\eeq{ \label{new_path}
0 = \sigma'_0\quad\stackrel{\text{first $m^{(\ell)}_k$ steps of $\sigma'$}}{\mapsto}\quad \sigma'_{m_k^{(\ell)}} \quad\stackrel{\text{follow $\wt\sigma$}}{\mapsto} \quad\sigma_{t^{(\ell)}_k(\sigma',\sigma'')}''\quad \stackrel{\text{continue on $\sigma''$}}{\mapsto}\cdots
}

\begin{figure}
\fbox{\includegraphics[width=0.88\textwidth]{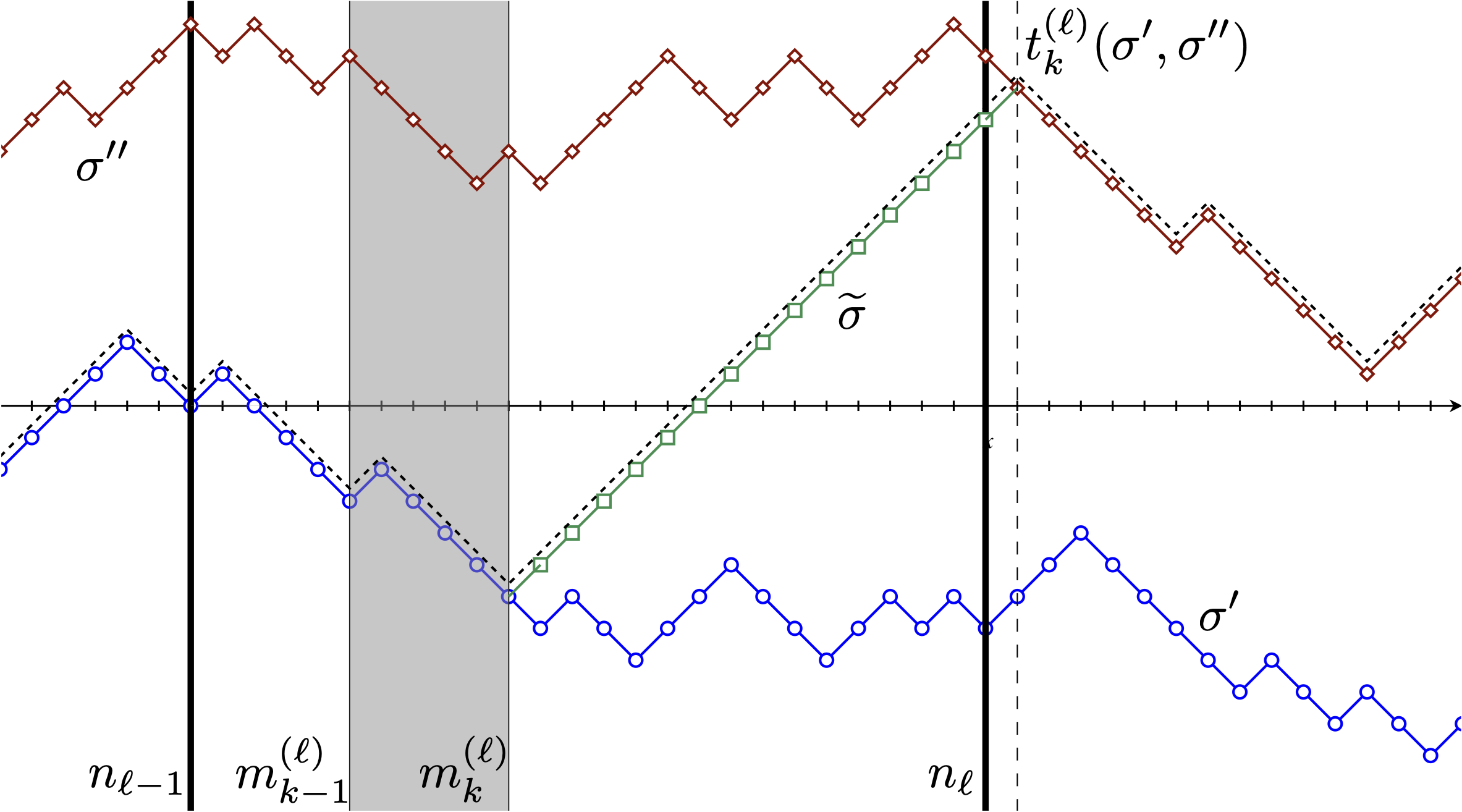}}
\captionsetup{width=0.9\textwidth}
\caption{Example construction of $\CC_k^{(\ell)}(\sigma',\sigma'')$ in $d=1$.
Time is visualized in the horizontal direction.
The path $\sigma'\in\DDD_\ell$ is shown as the blue solid curve with circles, $\sigma''\in\DDD_\ell$ as the red solid curve with diamonds, and the connecting path $\wt\sigma$ as the green solid curve with squares.
The concatenated path $\CC_k^{(\ell)}(\sigma',\sigma'')\in\DDD_{\ell+1}$ is displayed as the dashed trajectory, with a dashed vertical line marking the time $t_k^{(\ell)}(\sigma',\sigma'')$ of earliest possible connection from $\sigma'_{m_k^{(\ell)}}$ to $\sigma''$ terminating in the time interval $\llbrack n_{\ell}+1,n_{\ell+1}\rrbrack$.}
\label{figure_1}
\end{figure}

Once this procedure has been performed for all ordered pairs $(\sigma',\sigma'') \in \DDD_{\ell}\times\DDD_{\ell}$ with $\sigma'\neq\sigma''$, the construction of the set $\DDD_\ell^+$ is complete.
Note that $|\DDD_\ell^+| \leq |\DDD_{\ell}|(|\DDD_{\ell}|-1)(K-1)$, and so $|\DDD_{\ell+1}| \leq K|\DDD_{\ell}|^2$, which leads to the upper bound
\eq{
|\DDD_L| \leq K^{2^{L-1}-1}|\DDD_1|^{2^{L-1}} \stackref{delta_assumption_consequence}{\leq}
\Big(\frac{13}{\delta}\Big)^{2^{L-1}-1} J^{2^{L-1}}.
}
In particular, $|\DDD_L|$ is bounded by a constant independent of $N$.

We now claim that
\eeq{ \label{desired_containment}
\bigcup_{\sigma' \in \DDD_L}\bigcap_{\ell=1}^L \{\sigma\in\Sigma :\, R^{(\ell)}(\sigma',\sigma) \geq \delta^2/104\} \supset \bigcap_{\ell=1}^L\bigcup_{\sigma'\in\DDD_1} \{\sigma\in\Sigma :\, R^{(\ell)}(\sigma',\sigma) \geq \delta\}.
}
In light of \eqref{original_mu_statement} and the earlier observation regarding the cardinality of $\DDD_L$, the containment \eqref{desired_containment} establishes the conclusion of Theorem \ref{weak_main_thm} after replacing $J$ by $\big(\frac{13}{\delta}\big)^{2^{L-1}-1} J^{2^{L-1}}$ and $\delta$ by $\delta^2/104$.
In order to prove \eqref{desired_containment}, we reduce to the following claim.

\begin{claim} \label{inductive_reduction}
Suppose $\sigma \in \Sigma$ is such that for some $\ell\in\llbrack 1,L-1\rrbrack$, there exist $\sigma',\sigma'' \in \DDD_\ell$ such that
\eeqs{induct_hypothesis}{
R^{(\ell)}(\sigma',\sigma) &\geq \delta, \label{induct_1} \\
R^{(\ell+1)}(\sigma'',\sigma) &\geq \delta. \label{induct_2}
}
Then there is $\sigma'''\in\DDD_{\ell+1}$ such that 
\eeqs{induct_conclusion}{
R^{(\ell')}(\sigma''',\sigma) &= R^{(\ell')}(\sigma',\sigma) \quad \text{for all $\ell' \in \llbrack 1,\ell-1\rrbrack$}, \label{induct_3} \\
R^{(\ell)}(\sigma''',\sigma) &\geq \delta^2/104, \label{induct_4} \\
R^{(\ell+1)}(\sigma''',\sigma) &\geq \delta. \label{induct_5}
}
\end{claim}

Indeed, assume that Claim \ref{inductive_reduction} holds.
Any $\sigma$ belonging to the right-hand side of \eqref{desired_containment} has $R^{(1)}(\sigma^1,\sigma) \geq \delta$ and $R^{(2)}(\sigma^2,\sigma) \geq \delta$ for some $\sigma^1,\sigma^2\in\DDD_1$.
So there is $\sigma^{1,2} \in \DDD_2$ such that  $R^{(1)}(\sigma^{1,2},\sigma) \geq \delta^2/104$ and $R^{(2)}(\sigma^{1,2},\sigma) \geq \delta$.
Since there is also $\sigma^3\in\DDD_1\subset\DDD_2$ satisfying $R^{(3)}(\sigma^3,\sigma) \geq \delta$, we can repeat the process to produce $\sigma^{1,2,3}\in\DDD_3$ satisfying $R^{(1)}(\sigma^{1,2,3},\sigma), R^{(2)}(\sigma^{1,2,3},\sigma)\geq \delta^2/104$ and $R^{(3)}(\sigma^{1,2,3},\sigma) \geq \delta$.
Continuing in this way, one arrives at $\sigma^{1,\dots,L}\in\DDD_L$ such that $R^{(\ell)}(\sigma^{1,\dots,L},\sigma) \geq \delta^2/104$ for all $\ell\in\llbrack 1,L\rrbrack$.
That is, $\sigma$ belongs to the left-hand side of \eqref{desired_containment}.
\end{proof}

\begin{proof}[Proof of Claim \ref{inductive_reduction}]
%If there is some $\sigma'\in\DDD_0$ such that $R^{(\ell)}(\sigma',\sigma) \geq \delta$ for every $\ell\in\llbrack 1,L\rrbrack$, then $\sigma$ belongs to the set on the left-hand side, because $\DDD_L \supset \DDD_0$.

%Otherwise, there must be some $\ell\in\llbrack 1,L-1\rrbrack$ for which there exist distinct $\sigma',\sigma'' \in \DDD_0$ such that both $R^{(\ell)}(\sigma',\sigma)$ and $R^{(\ell+1)}(\sigma'',\sigma)$ are at least $\delta$.
%In this case, we have
Let $\sigma\in\Sigma$ and assume \eqref{induct_hypothesis} for some $\sigma',\sigma''\in\DDD_\ell$.
If $\sigma'=\sigma''$, then \eqref{induct_conclusion} holds by taking $\sigma'''$ equal to $\sigma'=\sigma''$.

If instead $\sigma'\neq\sigma''$, we recall the subintervals of $\llbrack m_{k-1}^{(\ell)}+1,m_k^{(\ell)}\rrbrack$, $k\in\llbrack 1,K\rrbrack$, introduced in \eqref{subsubinterval}.
We have
\eq{
\sum_{k=1}^{K}\sum_{i = m^{(\ell)}_{k-1}+1}^{m_k^{(\ell)}} \one_{\{\sigma'_i=\sigma_i\}}
= \sum_{i=n_{\ell-1}+1}^{n_\ell}  \one_{\{\sigma'_i=\sigma_i\}}
= (n_\ell-n_{\ell-1})R^{(\ell)}(\sigma',\sigma)
\stackref{trivial_bd_1}{\geq} \frac{\delta N}{2L}.
}
Therefore, there must be at least two distinct values of $k\in\llbrack 1,K\rrbrack$ for which
\eeq{ \label{smaller_guarantee}
\sum_{i = m^{(\ell)}_{k-1}+1}^{m^{(\ell)}_k} \one_{\{\sigma'_i = \sigma_i\}} \geq \frac{\delta N}{4LK},
}
since otherwise we would have the contradictory bound
\eq{
\sum_{k=1}^{K}\sum_{i= m^{(\ell)}_{k-1}+1}^{m^{(\ell)}_k} \one_{\{\sigma'_i = \sigma_i\}}
< (K-1)\frac{\delta N}{4LK} + \Big\lceil \frac{n_{\ell}-n_{\ell-1}}{K} \Big\rceil
\stackref{N_assumption_consequence}{\leq} \frac{\delta N}{4L} +\frac{ 3N/L}{K}
\stackref{delta_assumption_consequence}{\leq} \frac{\delta N}{2L}.
}
Let us call these two values $k_1$ and $k_2$, where $k_1 < k_2$.
We will now argue that the time $t_{k_1}^{(\ell)}(\sigma',\sigma'')$ defined in \eqref{meeting_time_def} is finite, and that the path $\sigma''' = \CC_{k_1}^{(\ell)}(\sigma',\sigma'')$ defined in \eqref{new_path} satisfies \eqref{induct_conclusion}.
%Before doing so, we note that by the construction of $\sigma'''$,
%\begin{linenomath}\postdisplaypenalty=0
%\begin{align}
%i \leq m_{k_1}^{(\ell)} \quad &\implies \quad \sigma'''_i = \sigma'_i, \label{agree_1} \\
%i \geq t_{k_1}^{(\ell)}(\sigma',\sigma'') \quad &\implies \quad \sigma'''_i = \sigma''_i. \label{agree_2}
%\end{align}
%\end{linenomath}

Now, we know that $\sigma''_t = \sigma_t$ for some $t\in\llbrack n_{\ell}+1,n_{\ell+1}\rrbrack$, simply by the fact that $R^{(\ell+1)}(\sigma'',\sigma) > 0$.
We claim that any such $t$ must satisfy $t \geq t^{(\ell)}_{k_1}(\sigma',\sigma'')$. (In particular, $t^{(\ell)}_{k_1}(\sigma',\sigma'')$ is finite.)
Indeed, by \eqref{smaller_guarantee} there is some $s \in \llbrack m_{k_2-1}^{(\ell)}+1,m^{(\ell)}_{k_2}\rrbrack$ for which $\sigma'_s = \sigma_s$.
Therefore, by following $\sigma'$ from $\sigma'_{m_{k_1}^{(\ell)}}$ to $\sigma'_s = \sigma_s$, and then following $\sigma$ from $\sigma_s$ to $\sigma_{t}=\sigma''_{t}$, we will have constructed a nearest-neighbor path connecting $\sigma'_{m^{(\ell)}_{k_1}}$ to $\sigma''_{t}$ in exactly $t - m^{(\ell)}_{k_1}$ steps; see Figure \ref{figure_2}.
\begin{figure}
\fbox{\includegraphics[width=0.88\textwidth]{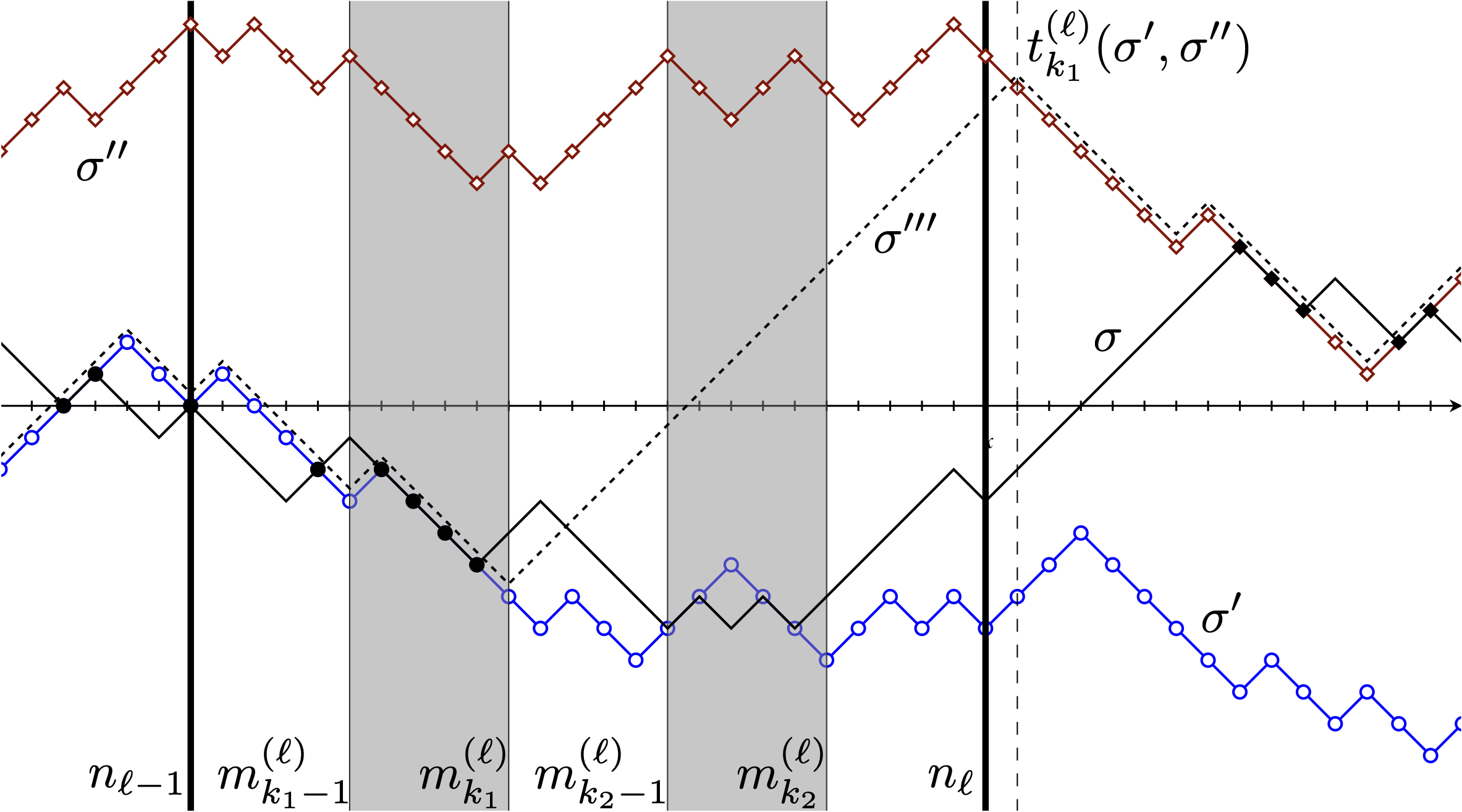}}
\captionsetup{width=0.9\textwidth}
\caption{Overlap considerations in Claim \ref{inductive_reduction}.
As in Figure \ref{figure_1}, $\sigma'$ and $\sigma''$ are displayed as solid curves with circles and diamonds, respectively, and $\sigma''' = \CC_{k_1}^{(\ell)}(\sigma',\sigma'')$ is the dashed trajectory. 
The path $\sigma$ under consideration is shown as a solid curve without decoration, and its intersections with $\sigma'''$ are marked as filled circles or diamonds.
On one hand, because $\sigma'''$ agrees with $\sigma'$ at least until time $m_{k_1}^{(\ell)}$, it retains all overlap with $\sigma$ incurred by $\sigma'$ up to that point; this observation leads to \eqref{induct_3} and \eqref{induct_4}.
On the other, $\sigma$ is known to intersect with $\sigma'$ in some later interval $\llbrack m_{k_2-1}^{(\ell)}+1,m_{k_2}^{(\ell)}\rrbrack$, and so all its intersections with $\sigma''$ past time $n_{\ell}$ must occur after $\sigma'''$ has coincided with $\sigma''$, which occurs at time $t_{k_1}^{(\ell)}(\sigma',\sigma'')$; hence \eqref{induct_5} follows from \eqref{induct_2}.}
\label{figure_2}
\end{figure}
By definition \eqref{meeting_time_def}, this means $t \geq t^{(\ell)}_{k_1}(\sigma',\sigma'')$.

Given that $t_{k_1}^{(\ell)}(\sigma',\sigma'')$ is finite, let $\sigma'''$ be the path $\CC_{k_1}^{(\ell)}(\sigma',\sigma'')$  constructed in \eqref{new_path}.
In particular, $\sigma'''$ agrees with $\sigma'$ up to time $m_{k_1}^{(\ell)}$, and with $\sigma''$ from time $t_{k_1}^{(\ell)}(\sigma',\sigma'')$ onward.
In symbols, these are the statments
\begin{linenomath}\postdisplaypenalty=0
\begin{align}
i \in\llbrack1,m_{k_1}^{(\ell)}\rrbrack \quad &\implies \quad \sigma'''_i = \sigma'_i, \label{agree_1} \\
i \in\llbrack t_{k_1}^{(\ell)}(\sigma',\sigma''),N\rrbrack \quad &\implies \quad \sigma'''_i = \sigma''_i. \label{agree_2}
\end{align}
\end{linenomath}
Now, the argument of the previous paragraph showed that for any $t\in\llbrack n_\ell+1,n_{\ell+1}\rrbrack$ such that $\sigma_t'' = \sigma_t$, we necessarily have $t \geq t_{k_1}^{(\ell)}(\sigma',\sigma'')$ and thus $\sigma'''_t=\sigma''_t=\sigma_t$ by \eqref{agree_2}.
%Of course, the same inequality is true for any $t\in\llbrack n_{\ell+1}+1,N\rrbrack$, since $t_{k_1}^{(\ell)}(\sigma',\sigma'')\leq n_{\ell+1}$.
%Consequently, the following implication holds for all $t\in\llbrack n_{\ell}+1,N\rrbrack$:
%\eq{
%\sigma_t=\sigma''_t \quad \implies \quad t \geq t^{(\ell)}_{k_1}(\sigma',\sigma'') \quad \stackref{agree_2}{\implies} \quad  \sigma'''_t = \sigma''_t.
%}
In particular, we have $R^{(\ell+1)}(\sigma''',\sigma) \geq R^{(\ell+1)}(\sigma'',\sigma) \geq \delta$, thus verifying \eqref{induct_5}.
On the other hand, \eqref{induct_3} follows from \eqref{agree_1} because $m^{(\ell)}_{k_1} > n_{\ell-1}$.
Finally, to obtain \eqref{induct_4}, observe that
\eq{ %\label{will_lead_to_1}
R^{(\ell)}(\sigma''',\sigma)
&\stackref{agree_1}{\geq} \frac{1}{n_{\ell}-n_{\ell-1}}\sum_{i = m^{(\ell)}_{k_1-1}+1}^{m^{(\ell)}_{k_1}}  \one_{\{\sigma'_i = \sigma_i\}} \\
&\stackref{smaller_guarantee}{\geq} \frac{\delta N}{(n_\ell-n_{\ell-1})4LK}
\stackref{trivial_bd_1}{\geq} \frac{\delta}{8K} 
\stackref{delta_assumption_consequence}{\geq} \frac{\delta^2}{104}.
\tag*{\qedhere}
}
\end{proof}

\section{Multi-temperature free energy} \label{multi_temp}

As outlined in Section \ref{proof_sketch}, our proof strategy in Section \ref{weaker_proof} %Section \ref{weaker_proof} 
will require us to isolate the Hamiltonian on each regular subinterval $\llbrack n_{\ell-1}+1,n_\ell\rrbrack$.
Mechanically, this can be done by letting the inverse temperature $\beta$ depend on time and setting it equal to zero outside the interval $\llbrack n_{\ell-1}+1,n_\ell\rrbrack$.
As it turns out, it will be easier to take the complementary route of setting the inverse temperature to zero \textit{inside} $\llbrack n_{\ell-1}+1,n_\ell\rrbrack$, and keeping it unchanged outside.
Either choice changes the free energy, of course, and we will need to show that a statement analogous to $\eqref{free_energy_thm_3}$ still holds.
It will not be any more difficult, however, to allow the inverse temperature to assume a different value on each interval $\llbrack n_{\ell-1}+1,n_\ell\rrbrack$, $\ell\in\llbrack 1,L\rrbrack$.

In what follows, we will use the notation $\PP_{N,L}$ to denote the partition 
\eeq{ \label{partition_def}
\PP_{N,L}: \quad 0 = n_0(N) \leq n_1(N) \leq \cdots \leq n_L(N) = N,
}
which is chosen to satisfy \eqref{interval_condition}.
Let $\vc\beta = (\beta_1,\dots,\beta_L) \in [0,\infty)^L$ and consider the Hamiltonian %in which the inverse temperature is distinct for each interval $\llbrack n_{\ell-1}+1,n_\ell\rrbrack$ of time.
\eq{
H_{\PP_{N,L}}^{\vc\beta}(\sigma) \coloneqq \sum_{\ell=1}^L \beta_\ell\sum_{i=n_{\ell-1}+1}^{n_\ell}g_N(i,\sigma_i).
}
The associated %Gibbs measure and 
partition function will be written as
\eq{
Z_{\PP_{N,L}}(\vc\beta) \coloneqq E(\e^{H_{\PP_{N,L}}^{\vc\beta}(\sigma)}).
}
The concentration result stated below shows that the multi-temperature expression \linebreak $\frac{1}{N}\log Z_{P_{N,L}}(\vc\beta)$ is well-approximated by an average of single-temperature free energies.
Ultimately, we will need only the asymptotic statement \eqref{generalized_free_energy_thm_2}, but with minimal extra effort we can prove \eqref{generalized_free_energy_thm_1} as an intermediate step.

\begin{thm} \label{generalized_free_energy_thm}
For any $L\geq1$, there exist positive constants $C_1 = C_1(L,d)$ and $c_2 = c_2(L)$ such that
for any $u>0$, $N\geq L^2$, $\vc\beta\in[0,\infty)^L$, and $\PP_{N,L}$ satisfying \eqref{interval_condition}, we have
\eeq{ \label{generalized_free_energy_thm_1}
\P\Big(\Big|\frac{\log Z_{\PP_{N,L}}(\vc\beta)}{N} - \sum_{\ell=1}^L \frac{\E\log Z_{n_\ell-n_{\ell-1}}(\beta_\ell)}{N} \Big| > u\Big) \leq C_1N^d\exp\Big(-c_2\frac{Nu^2}{\beta_{\max}^2}\Big),
}
where $\beta_{\max} \coloneqq \max\{\beta_1,\dots,\beta_L\}$.
Consequently,
\eeq{ \label{generalized_free_energy_thm_2}
\lim_{N\to\infty} \frac{\log Z_{\PP_{N,L}}(\vc\beta)}{N} = \frac{1}{L}\sum_{\ell=1}^L p(\beta_\ell) \quad \text{$\P$-$\mathrm{a.s.}$ and in $L^\alpha(\P)$ for all $\alpha\in[1,\infty)$}.
}
\end{thm}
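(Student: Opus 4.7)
The plan is to prove \eqref{generalized_free_energy_thm_1} by combining Gaussian concentration of $\log Z_{\PP_{N,L}}(\vc\beta)$ around its own mean with a two-sided comparison of that mean to $\sum_\ell \E \log Z_{n_\ell-n_{\ell-1}}(\beta_\ell)$; statement \eqref{generalized_free_energy_thm_2} will then follow from \eqref{generalized_free_energy_thm_1} via Borel--Cantelli and uniform integrability, together with Theorem~\ref{free_energy_thm}. For the concentration, I apply the Borell--TIS inequality to $\vc g \mapsto \log Z_{\PP_{N,L}}(\vc\beta)$: a direct calculation yields $\partial_{g_N(i,x)} \log Z_{\PP_{N,L}}(\vc\beta) = \beta_\ell \mu(\sigma_i = x)$ for $i \in \llbrack n_{\ell-1}+1,n_\ell\rrbrack$, where $\mu$ is the corresponding Gibbs measure, so the squared gradient norm is at most $\sum_\ell \beta_\ell^2 (n_\ell-n_{\ell-1}) \leq \beta_{\max}^2 N$, giving a sub-Gaussian tail of scale $\beta_{\max}\sqrt{N}$ for $\log Z_{\PP_{N,L}}(\vc\beta) - \E\log Z_{\PP_{N,L}}(\vc\beta)$.

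For the mean comparison I decompose the partition function according to the walker's position at the boundary times. Write $m_\ell := n_\ell - n_{\ell-1}$; let $Y^{(\ell)}_y$ denote the partition function of a length-$m_\ell$ polymer starting at $y$ and using only the disorder $\{g_N(n_{\ell-1}+i,\cdot)\}_i$; and let $B^{(k)}_y := Z_{\PP_{n_k,k}}(\beta_1,\dots,\beta_k)$ restricted to paths with $\sigma_{n_k} = y$. By translation invariance of the i.i.d.\ disorder, $Y^{(\ell)}_y \overset{d}{=} Z_{m_\ell}(\beta_\ell)$ for each individual $y$, and the families $\{Y^{(\ell)}\}_\ell$ are mutually independent across $\ell$ (disjoint disorders). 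The Markov property of the random walk yields the recursion $Z_{\PP_{n_\ell,\ell}}(\beta_1,\dots,\beta_\ell) = \sum_y B^{(\ell-1)}_y Y^{(\ell)}_y$, with $B^{(\ell-1)}$ and $Y^{(\ell)}$ independent.

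For the lower bound $\E \log Z_{\PP_{N,L}}(\vc\beta) \geq \sum_\ell \E \log Z_{m_\ell}(\beta_\ell)$, I apply iterated Jensen: viewing $\bar p_y := B^{(\ell-1)}_y/\sum_y B^{(\ell-1)}_y$ as a probability and using concavity of $\log$, the recursion gives $\log Z_{\PP_{n_\ell,\ell}} \geq \log Z_{\PP_{n_{\ell-1},\ell-1}} + \sum_y \bar p_y \log Y^{(\ell)}_y$; taking conditional expectation over the disorder on $\llbrack n_{\ell-1}+1,n_\ell\rrbrack$ collapses the inner sum to $\E \log Z_{m_\ell}(\beta_\ell)$ (independent of $y$ by translation invariance), and iterating proves the claim. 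For the \emph{upper bound}, I iterate the deterministic inequality $\sum_y B^{(\ell-1)}_y Y^{(\ell)}_y \leq \bigl(\sum_y B^{(\ell-1)}_y\bigr) \max_y Y^{(\ell)}_y$ to obtain $\log Z_{\PP_{N,L}}(\vc\beta) \leq \sum_{\ell=1}^L \max_y \log Y^{(\ell)}_y$. For each $\ell$, Gaussian concentration of $\log Y^{(\ell)}_y$ (sub-Gaussian scale $\beta_\ell\sqrt{m_\ell}$) together with a union bound over the at most $(N+1)^d$ reachable values of $y$ yields $\max_y \log Y^{(\ell)}_y \leq \E \log Z_{m_\ell}(\beta_\ell) + t_\ell$ with failure probability $\leq 2(N+1)^d \exp(-t_\ell^2/(2\beta_\ell^2 m_\ell))$. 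Choosing $t_\ell$ proportional to $u\beta_\ell\sqrt{m_\ell N/L}/\beta_{\max}$ so that $\sum_\ell t_\ell \leq Nu/2$ (via Cauchy--Schwarz together with $\sum_\ell m_\ell = N$), the cumulative failure probability is of order $LN^d \exp(-Nu^2/(cL\beta_{\max}^2))$. Combined with the Borell--TIS tail in the lower direction, this yields \eqref{generalized_free_energy_thm_1} with constants of the required form. The main obstacle is the upper-bound union bound, which is what produces the $N^d$ prefactor and ties the $L$-dependence of $c_2$ to the Cauchy--Schwarz step above.

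Finally, \eqref{generalized_free_energy_thm_2} is derived by taking $u_N = C\sqrt{\log N/N}$ in \eqref{generalized_free_energy_thm_1}, with $C$ chosen large enough that the failure bound is summable in $N$; Borel--Cantelli then yields a.s.\ convergence. Theorem~\ref{free_energy_thm} gives $\E \log Z_{m_\ell}(\beta_\ell)/m_\ell \to p(\beta_\ell)$, which combined with $m_\ell/N \to 1/L$ from \eqref{interval_condition} identifies the limit as $\tfrac{1}{L}\sum_\ell p(\beta_\ell)$. The $L^\alpha(\P)$ convergence for every $\alpha \geq 1$ follows from uniform integrability of arbitrary powers of $\log Z_{\PP_{N,L}}(\vc\beta)/N$, which is in turn implied by the Gaussian tail in \eqref{generalized_free_energy_thm_1}.
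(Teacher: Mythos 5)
Your argument is correct but proceeds along a genuinely different route from the paper's. Both proofs share the same starting point (factor $Z_{\PP_{N,L}}$ at the boundary times $n_\ell$ via the Markov property, note the per-interval pieces $Y^{(\ell)}_y$ are each equidistributed with $Z_{m_\ell}(\beta_\ell)$ and independent across $\ell$, and union-bound over the $O(N^d)$ reachable positions), but they differ in how they pass from the factored form to the claimed estimate. The paper runs an induction on $L$, peeling off the final interval each step: it writes $Z_{\PP_{N,L+1}} = A\sum_x (A(x)/A) B(x)$ and invokes a small abstract lemma about monotone functions of a random variable (Lemma \ref{trivial_lemma}) to convert a large two-sided deviation of $\log\sum_x (A(x)/A) B(x)$ from the weighted mean $\sum_x (A(x)/A)\log B(x)$ into a large deviation of some individual $\log B(x)$, which is then handled by \eqref{free_energy_thm_2} and a union bound. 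This lemma controls both tails symmetrically in one stroke. Your argument instead handles the two tails asymmetrically and all $L$ intervals at once: the lower deviation comes from Gaussian concentration of $\log Z_{\PP_{N,L}}$ about its own mean (gradient bound $\sum_\ell\beta_\ell^2 m_\ell\le\beta_{\max}^2 N$) combined with the free-energy superadditivity $\E\log Z_{\PP_{N,L}}\ge\sum_\ell\E\log Z_{m_\ell}(\beta_\ell)$ from Jensen, while the upper deviation uses the crude pointwise bound $Z_{\PP_{N,L}}\le\prod_\ell\max_y Y^{(\ell)}_y$ and then concentration plus a double union bound over $\ell$ and $y$. The allocation $t_\ell\propto u\beta_\ell\sqrt{m_\ell N/L}/\beta_{\max}$ indeed makes each exponent $\ell$-independent and of the stated form, so you land on $C_1 N^d\exp(-c_2 N u^2/\beta_{\max}^2)$ with $C_1\propto L$ and $c_2\propto 1/L$, matching the target. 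What each buys: your approach is more direct (no induction) with visible constants and cleanly separates the two deviation mechanisms, at the cost of the slightly wasteful $\max_y$ step; the paper's inductive step with Lemma \ref{trivial_lemma} is tighter in spirit and handles both tails at once, but requires the bookkeeping of the induction. Either is acceptable, and the derivation of \eqref{generalized_free_energy_thm_2} from \eqref{generalized_free_energy_thm_1} (choosing $u_N\asymp\sqrt{\log N/N}$ for Borel--Cantelli, $m_\ell/N\to 1/L$ and $\E\log Z_{m_\ell}/m_\ell\to p(\beta_\ell)$ for identifying the limit, uniform integrability for $L^\alpha$) is the same ``standard argument'' the paper gestures at.
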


In the proof, we will use the following simple lemma.

\begin{lemma} \label{trivial_lemma}
Let $X$ be a random variable taking values in $(a,b)$ with mean $\Upsilon$, where $-\infty \leq a < b \leq \infty$.
Let $f \colon (a,b)\to\R$ be a monotone function, and denote the mean of $f(X)$ by $\Upsilon_f$.
If $|f(\Upsilon)-\Upsilon_f| > 2u > 0$, then the event $\{|f(X) - y| > u\}$ has positive probability for every $y\in\R$.
\end{lemma}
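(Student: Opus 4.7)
The natural approach is by contrapositive: assuming there exists $y \in \R$ with $\P(|f(X) - y| > u) = 0$, I will derive $|f(\Upsilon) - \Upsilon_f| \leq 2u$. Two observations will combine to give this. First, from $f(X) \in [y-u, y+u]$ almost surely, integration immediately yields $\Upsilon_f \in [y-u, y+u]$. Second, I want the analogous containment $f(\Upsilon) \in [y-u, y+u]$; to obtain it, I exploit monotonicity to see that the level set $I \coloneqq f^{-1}([y-u, y+u]) \cap (a,b)$ is a (possibly degenerate) subinterval of $(a,b)$, so the hypothesis translates to $X \in I$ almost surely.

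The remaining point is to show that $\Upsilon = \E[X]$ lies in $I$ itself and not merely in its closure. Writing $\iota_- = \inf I$ and $\iota_+ = \sup I$, we have $\Upsilon \in [\iota_-, \iota_+]$. If $\iota_- \notin I$, then $X > \iota_-$ almost surely, so the nonnegative integrable random variable $X - \iota_-$ is strictly positive a.s., forcing $\E[X - \iota_-] > 0$ and hence $\Upsilon > \iota_-$; a symmetric argument handles the upper endpoint. Thus $\Upsilon \in I$, which by definition of $I$ gives $f(\Upsilon) \in [y-u,y+u]$. Two applications of the triangle inequality now close the argument:
\[ |f(\Upsilon) - \Upsilon_f| \leq |f(\Upsilon) - y| + |y - \Upsilon_f| \leq u + u = 2u, \]
contradicting the standing hypothesis $|f(\Upsilon) - \Upsilon_f| > 2u$.

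The only delicate point in this plan is the possible exclusion of $\Upsilon$ from $I$ at an open endpoint, and it is settled by the elementary fact that a nonnegative random variable which is strictly positive almost surely has strictly positive expectation; this same observation also ensures $\Upsilon \in (a,b)$ to begin with, so that $f(\Upsilon)$ is well-defined. Beyond this, I do not anticipate any obstacles.
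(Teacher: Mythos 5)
Your proof is correct and follows the same overall strategy as the paper's: argue by contrapositive, observe that $\Upsilon_f \in [y-u,y+u]$ immediately, show that $f(\Upsilon)\in[y-u,y+u]$ as well, and finish with the triangle inequality. Where the paper splits into cases (``$X$ a.s.~constant'' versus not) and sandwiches $f(\Upsilon)$ between $f(\Upsilon-\delta)$ and $f(\Upsilon+\delta)$ using a $\delta$ for which both tails have positive probability, you instead work directly with the interval $I = f^{-1}([y-u,y+u])\cap(a,b)$ and show $\Upsilon\in I$ by ruling out the open endpoints; both arguments rest on the same elementary fact that a nonnegative, a.s.~strictly positive integrable random variable has strictly positive mean, so the difference is purely one of presentation (your version arguably avoids the case split a bit more cleanly, modulo the minor unstated remark that the endpoint argument is only needed when $\iota_\pm$ is finite, the infinite case being trivial).
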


\begin{proof}
We prove the contrapositive.
That is, assume $f(X)\in[y-u,y+u]$ with probability one, for some $y\in\R$.
Without loss of generality, we may assume $f$ is non-decreasing; if not, we apply the argument to $-f$.
If $X$ is an almost sure constant, then $f(\Upsilon) = \Upsilon_f$.
Otherwise, there exists $\delta>0$ such that each of $\{X\leq\Upsilon-\delta\}$ and $\{X\geq\Upsilon+\delta\}$ occur with positive probability.
In this case, we must have 
\eq{
y-u \leq f(\Upsilon-\delta) \leq f(\Upsilon)\leq f(\Upsilon+\delta)\leq y+u.
}
Of course, we also know $\Upsilon_f \in [y-u,y+u]$, and so $|f(\Upsilon)-\Upsilon_f| \leq 2u$.
\end{proof}

%\begin{theirthm} [{\cite[Theorem 1.4]{liu-watbled09}}] \label{concentration}
%\eeq{ \label{Fn_tail_estimates}
%\P\Big(\Big|\frac{\log Z_N(\beta)}{N} - \frac{\E\log Z_N(\beta)}{N}\Big| > t\Big) \leq \begin{cases}
%2\e^{-nct^2} &\mathrm{if~} t \in [0,1], \\
%2\e^{-nct} &\mathrm{if~} t > 1, \end{cases}
%}
%where $c > 0$ is a constant depending only on the value of $K \coloneqq 2\e^{p(\beta)+p(-\beta)}$.
%\end{theirthm}

\begin{proof}[Proof of Theorem \ref{generalized_free_energy_thm}]
%It suffices to show that for any $\eps>0$, we have
%\eq{
%\bigg|\frac{\log Z_N(\vc\beta)}{N}  - \frac{1}{L}\sum_{\ell=1}^L p(\beta_\ell)\bigg| < \Delta_N + \eps,
%}
%where $\Delta_N$ is a quantity 
We proceed by induction on $L$. 
Our inductive hypothesis is that for any $u>0$, any integer $N\geq L^2$, and any partition 
\eq{
\PP_{N,L}: \quad 0 = n_0(N) < n_1(N) < \cdots <  n_{L}(N) = N
}
that is valid in the sense of \eqref{interval_condition}, we have
\eeq{
\label{first_summands}
&\P\bigg(\bigg|\frac{\log Z_{\PP_{N,L}}(\vc\beta)}{N} - \sum_{\ell=1}^{L}\frac{\E\log Z_{ n_\ell- n_{\ell-1}}(\beta_\ell)}{N}\bigg| > u\bigg)\\ 
&\leq 4L(2N+1)^d\exp\Big(-\frac{Nu^2}{18L^2\beta^2_{\max}}\Big),
}
where \eqref{free_energy_thm_2} provides the base case of $L=1$.
Once we prove the inductive step, \eqref{first_summands} will yield \eqref{generalized_free_energy_thm_1} with  $C_1 = 3^d\cdot 4L$ and $c_2 = 1/(18L^2)$.
Then, by standard arguments, it follows that
\eq{
\lim_{N\to\infty} \Big|\frac{\log Z_{\PP_{N,L}}(\vc\beta)}{N} - \sum_{\ell=1}^L \frac{\E\log Z_{n_\ell-n_{\ell-1}}(\beta_\ell)}{N} \Big| = 0 \quad \text{$\P$-$\mathrm{a.s.}$ and in $L^\alpha(\P)$, $\alpha\in[1,\infty)$}.
}
This limit is seen to be equivalent to \eqref{generalized_free_energy_thm_2} once we recall \eqref{free_energy_thm_1}.
Therefore, the rest of the proof is establishing the induction.
We thus assume \eqref{first_summands} and consider any $\beta_{L+1}\in[0,\infty)$ and any partition
\eq{
\PP_{N,L+1}: \quad 0 = n_0(N) < n_1(N) < \cdots < n_L(N) < n_{L+1}(N) = N,
}
where now $N\geq(L+1)^2$.

Define the set $D_i \coloneqq \{x\in\Z^d :\, P(\sigma_i=x) > 0\}$ for each integer $i\geq1$.
Observe that for any fixed realization of the disorder $\vc g$, if we condition on the value of $\sigma_i$ for some $i\in\llbrack 1,N\rrbrack$, then by the Markov property of the random walk, the vectors
\eq{
(g_N(1,\sigma_1),\dots,g_N(i,\sigma_i)) \quad \text{and} \quad (g_N(i,\sigma_i),\dots,g_N(N,\sigma_N))
}
are conditionally independent with respect to $P$.
Using this observation when $i = n_{L}$, we have
\eq{
&Z_{\PP_{N,L+1}}(\vc \beta,\beta_{L+1}) = \sum_{x\in\Z^d} E\givenp{\e^{H_{\PP_{N,L+1}}^{(\vc \beta,\beta_{L+1})}(\sigma)}}{\sigma_{n_{L}}=x}P(\sigma_{n_{L}}=x) \\
&=\sum_{x\in D_{n_{L}}} \bigg[E\givenp[\bigg]{\exp\bigg\{\sum_{\ell=1}^{L}\beta_\ell\sum_{i=n_{\ell-1}+1}^{n_\ell}g_N(i,\sigma_i)\bigg\}}{{\sigma_{n_{L}}=x}}P(\sigma_{n_{L}}=x) \\
&\phantom{=} \times E\givenp[\bigg]{\exp\bigg\{\beta_{L+1}\sum_{i=n_{L}+1}^{n_{L+1}} g_N(i,\sigma_i)\bigg\}}{\sigma_{n_{L}}=x}\bigg].
}
To condense notation, let us write
\eq{
A(x) &\coloneqq E\bigg(\exp\bigg\{\sum_{\ell=1}^{L}\beta_\ell\sum_{i=n_{\ell-1}+1}^{n_\ell}g_N(i,\sigma_i)\bigg\};{\sigma_{n_{L}}=x}\bigg), \\
B(x) &\coloneqq E\givenp[\bigg]{\exp\bigg\{\beta_{L+1}\sum_{i=n_{L}+1}^{n_{L+1}} g_N(i,\sigma_i)\bigg\}}{\sigma_{n_{L}}=x}.
}
Note that
\eq{% \label{lower_partition}
A \coloneqq \sum_{x\in D_{n_{L}}} A(x)
&=Z_{\bar\PP_{N,L}}(\beta_1,\dots,\beta_{L}),
}
where $\bar\PP_{n_L,L}$ is the partition of $\llbrack 1,n_{L}\rrbrack$ into $L$ parts induced by $\PP_{N,L+1}$.
That is,
\eq{
\bar\PP_{n_L,L}: \ \ 0 = \bar n_0(n_L) \leq \bar n_1(n_{L}) \leq \cdots \leq \bar n_{L}(n_{L}) = n_{L}, \ \ \text{where} \ \
\bar n_\ell(n_{L}(N)) = n_\ell(N).
}
%is the partition of $\llbrack 1,n_{L}\rrbrack$ into $L$ parts induced by $\PP_{N,L+1}$.
%That is,
%\eq{
%\bar n_\ell(n_{L}(N)) = n_\ell(N), \quad \ell=0,\dots,L.
%}
We are thus interested in the limit of 
\eeq{ \label{peeling_off}
\frac{\log Z_{\PP_{N,L+1}}(\vc\beta,\beta_{L+1})}{N} 
&= \frac{1}{N}\log\bigg(A\sum_{x\in D_{n_{L}}}\frac{A(x)}{A}B(x)\bigg) \\
&= \frac{\log Z_{\bar\PP_{n_L,L}}(\vc\beta)}{N} + \frac{1}{N}\log\sum_{x\in D_{n_{L}}} \frac{A(x)}{A}B(x).
}
%This is permissible, since our inductive hypothesis is that for \textit{any} sequence of subdivisions of the integer intervals $(\llbrack 1,N\rrbrack)_{N\geq1}$ into $L-1$ parts, which are permissible in the sense of \eqref{interval_condition}, 
Since $N\geq (L+1)^2$, we necessarily have $N/(L+1) \geq L+1$ and thus
\eeq{ \label{annoying_1}
\frac{\lceil N/(L+1)\rceil}{\lfloor N/(L+1)\rfloor} \leq \frac{L+2}{L+1}.
}
Therefore,
\eeq{ \label{last_term_bound}
%\frac{N}{L} \geq L \quad &\implies \quad 
n_{L} \geq N - \Big\lceil \frac{N}{L+1} \Big\rceil 
&\geq N-\frac{L+2}{L+1}\Big\lfloor\frac{N}{L+1} \Big\rfloor \\
&\geq  N-\frac{L+2}{L+1}\frac{N}{L+1} \\
&= \frac{N}{(L+1)^2}((L+1)^2-L-2) \geq \frac{N}{(L+1)^2}L^2 \geq L^2,
%\Big\lceil \frac{N}{L} \Big\rceil \leq \frac{L+1}{L}\Big\lfloor \frac{N}{L} \Big\rfloor
}
and so the first term in the final expression of \eqref{peeling_off} is subject to the concentration inequality from \eqref{first_summands}.
That is,
\eeq{ \label{induction_raw}
&\P\bigg(\bigg|\frac{\log Z_{\bar\PP_{n_L,L}}(\vc\beta)}{N} - \sum_{\ell=1}^{L}\frac{\E\log Z_{\bar n_\ell-\bar n_{\ell-1}}(\beta_\ell)}{N}\bigg| > u\bigg) \\
&\stackrefp{last_term_bound}{=} \P\bigg(\bigg|\frac{\log Z_{\bar\PP_{n_L,L}}(\vc\beta)}{n_{L}} - \sum_{\ell=1}^{L}\frac{\E\log Z_{ n_\ell- n_{\ell-1}}(\beta_\ell)}{n_{L}}\bigg| > \frac{N}{n_{L}}u\bigg) \\
&\stackrefp{last_term_bound}{\leq} 4L(2n_{L}+1)^d\exp\Big(-\frac{n_{L}\big(\frac{N}{n_{L}}u\big)^2}{18L^2\beta^2_{\max}}\Big) \\
&\stackref{last_term_bound}{\leq} 4L(2N+1)^d\exp\Big(-\frac{N\big(\frac{N}{n_{L}}u\big)^2}{18(L+1)^2\beta^2_{\max}}\Big).
}
We are now left with the task of controlling the second term in the final expression of \eqref{peeling_off}.

Since all variables in $\vc g$ are i.i.d., we have the following equality in law with respect to $\P$:
\eeq{ \label{free_energy_in_distribution}
B(x)
 \stackrel{\mathrm{dist}}{=}Z_{N-n_{L}}(\beta_{L+1}) \quad \text{for each $x\in D_{n_{L}}$}.
}
In particular, $\E\log B(x)$ is constant among such $x$, and so taking this constant as the value of $y$, we conclude the following from Lemma \ref{trivial_lemma} with $f(t) = \log t$ and $X$ having probability distribution given by $A(\cdot)/A$.
If
\eq{
&\bigg|\log \sum_{x\in D_{n_{L}}} \frac{A(x)}{A}B(x) - \sum_{x\in D_{n_{L}}}\frac{A(x)}{A}\log B(x)\bigg|
> 2u,
}
then
\eq{
|\log B(x) - \E \log B(x)| > u \quad \text{for some $x\in D_{n_{L}}$.}
}
Also note that the following holds for all $\ell\in\llbrack 1,L+1\rrbrack$, in particular $\ell=L+1$:
\eq{
n_{\ell}-n_{\ell-1} \geq \Big\lfloor \frac{N}{L+1}\Big\rfloor \stackref{annoying_1}{\geq} \frac{L+1}{L+2}\Big\lceil\frac{N}{L+1}\Big\rceil 
\geq \frac{N}{L+2} 
\geq \frac{N}{(L+1)^2}.
}
Consequently,
\eeq{ \label{concentration_1}
&\P\bigg(\frac{1}{N}\bigg|\log \sum_{x\in D_{n_{L}}} \frac{A(x)}{A}B(x) - \sum_{x\in D_{n_{L}}}\frac{A(x)}{A}\log B(x)\bigg|
> 2u\bigg) \\
&\stackrel{\phantom{\mbox{\footnotesize\eqref{free_energy_in_distribution},\eqref{free_energy_thm_2}}}}{\leq} \P\bigg(\bigcup_{x\in D_{n_{L}}} \Big\{\frac{|\log B(x) - \E\log B(x)|}{N} > u\Big\}\bigg) \\
&\stackrel{\mbox{\footnotesize\eqref{free_energy_in_distribution},\eqref{free_energy_thm_2}}}{\leq} 2|D_{n_{L}}|\exp\Big(-\frac{(N-n_{L})\big(u\frac{N}{N-n_{L}}\big)^2}{2\beta_{L+1}^2}\Big) \\
&\stackrel{\phantom{\mbox{\footnotesize\eqref{free_energy_in_distribution},\eqref{free_energy_thm_2}}}}{\leq}  2(2N+1)^d\exp\Big(-\frac{N\big(u\frac{N}{N-n_{L}}\big)^2}{2(L+1)^2\beta_{L+1}^2}\Big).
}
Moreover, by writing
\eq{
&\bigg|\E\log Z_{N-n_{L}}(\beta_{L+1}) - \sum_{x\in D_{n_{L}}}\frac{A(x)}{A}\log B(x)\bigg| \\
&\leq \sum_{x\in D_{n_{L}}}\frac{A(x)}{A}|\E\log B(x) - \log B(x)|,
}
we can repeat the previous estimate to obtain
\eeq{ \label{concentration_2}
&\P\bigg(\frac{1}{N}\bigg|\E\log Z_{N-n_{L}}(\beta_{L+1}) - \sum_{x\in D_{n_{L}}}\frac{A(x)}{A}\log B(x)\bigg| > u\bigg) \\
&\leq \P\bigg(\bigcup_{x\in D_{n_{L}}} \Big\{\frac{|\log B(x) - \E\log B(x)|}{N} > u\Big\}\bigg) \\
&\leq 2(2N+1)^d\exp\Big(-\frac{N\big(u\frac{N}{N-n_{L}}\big)^2}{2(L+1)^2\beta_{L+1}^2}\Big).
}
Together, \eqref{concentration_1} and \eqref{concentration_2} yield
\eeq{ \label{concentration_3}
&\P\bigg(\frac{1}{N}\bigg|\E\log Z_{N-n_{L}}(\beta_{L+1}) - \log \sum_{x\in D_{n_{L}}} \frac{A(x)}{A}B(x)\bigg| > 3u\bigg) \\
&\leq 4(2N+1)^d\exp\Big(-\frac{N\big(u\frac{N}{N-n_{L}}\big)^2}{2(L+1)^2\beta_{L+1}^2}\Big).
%&\leq 2(2N+1)^d\exp\Big(-\frac{NLu^2}{4\beta_{\max}^2}\Big).
}
Putting together \eqref{peeling_off}, \eqref{induction_raw}, and \eqref{concentration_3}, we conclude
\eq{
&\P\Big(\Big|\frac{\log Z_{\PP_{N,L+1}}(\vc\beta,\beta_{L+1})}{N} - \sum_{\ell=1}^{L+1} \frac{\E\log Z_{n_\ell-n_{\ell-1}}(\beta_\ell)}{N} \Big| > u\Big) \\
&\leq \P\bigg(\bigg|\frac{\log Z_{\bar\PP_{n_L,L}}(\vc\beta)}{N} - \sum_{\ell=1}^{L}\frac{\E\log Z_{\bar n_\ell-\bar n_{\ell-1}}(\beta_\ell)}{N}\bigg| > u\frac{n_{L}}{N}\bigg) \\
&\phantom{\leq} + \P\bigg(\frac{1}{N}\bigg|\E\log Z_{N-n_{L}}(\beta_{L+1}) - \log \sum_{x\in D_{n_{L}}} \frac{A(x)}{A}B(x)\bigg| > u\frac{N-n_{L}}{N}\bigg) \\
&\leq 4L(2N+1)^d\exp\Big(-\frac{Nu^2}{18(L+1)^2\beta^2_{\max}}\Big)+4(2N+1)^d\exp\Big(-\frac{N\big(\frac{u}{3}\big)^2}{2(L+1)^2\beta^2_{L+1}}\Big) \\
&= 4(L+1)(2N+1)^d\exp\Big(-\frac{Nu^2}{18(L+1)^2(\beta_{\max}\vee\beta_{L+1})^2}\Big),
%&\leq 2(L-1)(2N+1)^d\exp\Big(-\frac{N(u\frac{L-1}{L})^2}{36(L-1)^2\beta_{\max}^2}\Big) + 2(2N+1)^d\exp\Big(-\frac{NL(\frac{u}{3L})^2}{4\beta_{\max}^2}\Big) \\
%&\leq 2L(2N+1)^d\exp\Big(-\frac{Nu^2}{36L^2\beta_{\max}^2}\Big),
}
which verifies the inductive step needed for \eqref{first_summands}.
\end{proof}

\section{Proof of Theorem \ref{weaker_main_thm}} \label{weaker_proof}

In preparation for the proof, we introduce the main input, Theorem \ref{previous_thm}, from \cite{bates-chatterjee20II}.
The statement is exactly the same as Theorem \ref{previous_thm_polymer} but holds for more general Gaussian disordered systems.
So that there is no confusion caused by duplicate notation, let us introduce a generic setting.

Let $(\Omega,\FF,\Pb)$ be an abstract probability space, and $(\Sigma_N)_{N\geq1}$ a sequence of Polish spaces %
equipped respectively with probability measures $(\nu_N)_{N\geq1}$. 
For each $N$, we consider a centered Gaussian field $(\HH_N(\sigma))_{\sigma\in\Sigma_N}$ defined on $\Omega$.  
Regarding this field as a Hamiltonian, we denote the associated Gibbs measure by
\eeq{ \label{nu_def}
\nu_{N,\beta}(\dd\sigma) \coloneqq \frac{\e^{\beta \HH_N(\sigma)}}{\ZZ_N(\beta)}\ \nu_N(\dd\sigma), \quad \text{where} \quad
\ZZ_N(\beta) \coloneqq \int_{\Sigma_N} \e^{\beta \HH_N(\sigma)}\ \nu_N(\dd\sigma).
}
We make the following assumptions:
\begin{itemize}
\item There is a deterministic function $\PPP \colon [0,\infty) \to \R$ and a deterministic sequence $(a_N)_{N\geq1}$ tending to infinity,\footnote{Strictly speaking, \cite{bates-chatterjee20II} considers only the case $a_N=N$, although this is just for purposes of exposition. Even so, this single case would be enough for our purposes, since we will ultimately apply Theorem \ref{previous_thm} with $a_N = n_{\ell}(N)-n_{\ell-1}(N)$.  Indeed, the associated sequence of partitions $(\PP_{N,L})_{N\geq1}$ from \eqref{partition_def} is contained in the union of finitely many sequences of the form $(\PP'_{N_i,L})_{i\geq1}$, where $n'_\ell(N_i)-n'_{\ell-1}(N_i) = i$.  Therefore, one can safely apply Theorem \ref{previous_thm} along each one of these sequences, and then the proof of Corollary \ref{ptwise_thm} goes through by choosing the maximum $J$, maximum $N_*$, and minimum $\delta$ resulting from these applications.}
such that
\[\label{free_energy_assumption} \tag{A1}
\lim_{N\to\infty} \frac{\log \ZZ_N(\beta)}{a_N} = \PPP(\beta) \quad \Pb\text{-}\mathrm{a.s.} \text{ and in $L^1(\Pb)$, for every $\beta\geq0$}. 
\]
\item For every $\sigma\in\Sigma_N$, we have
\[ \label{variance_assumption} \tag{A2}
\Var \HH_N(\sigma) = a_N.
\]
\item For any $\sigma^1,\sigma^2\in\Sigma_N$, we have
\[ \label{positive_overlap} \tag{A3}
\RR_N(\sigma^1,\sigma^2) \coloneqq \Corr(\HH_N(\sigma^1),\HH_N(\sigma^2)) = \frac{1}{a_N}\Cov(\HH_N(\sigma^1),\HH_N(\sigma^2)) \geq 0.
\]
\item For each $N$, there exist measurable real-valued functions $(\vphi_{i,N})_{i=1}^\infty$ on $\Sigma_N$ 
and i.i.d.~standard normal random variables $(\mathfrak{g}_{i,N})_{i=1}^\infty$ defined on $\Omega$ such that for each $\sigma\in\Sigma_N$,
\[ \label{field_decomposition} \tag{A4}
\HH_N(\sigma)  = \sum_{i=1}^\infty \mathfrak{g}_{i,N}\vphi_{i,N}(\sigma) \qquad \text{$\Pb$-$\mathrm{a.s.}$},
\]
where the series on the right converges in $L^2(\Pb)$. 
\end{itemize}

\begin{theirthm} \label{previous_thm}
\textup{\cite[Thm.~1.2]{bates-chatterjee20II}}
Assume \eqref{free_energy_assumption}--\eqref{field_decomposition}.
If $\beta>0$ is a point of differentiability for $\PPP(\cdot)$ with $\PPP'(\beta) < \beta$,
then for every $\eps > 0$, there exist integers $J = J(\beta,\eps)$ and $N_* = N_*(\beta,\eps)$ and a number $\delta = \delta(\beta,\eps)>0$ such that the following is true for all $N\geq N_*$.
With $\Pb$-probability at least $1-\eps$, there exist $\sigma^1,\dots,\sigma^J\in\Sigma_N$ such that 
\eq{
\nu_{N,\beta}\bigg(\bigcup_{j=1}^J \big\{\sigma\in\Sigma_N:\, R(\sigma^{j},\sigma)\geq\delta\big\}\bigg) \geq 1 - \eps.
}
\end{theirthm}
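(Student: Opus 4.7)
The plan is three-fold: (i) derive an exact formula for the annealed replica-overlap integral via Gaussian integration by parts applied to the decomposition \eqref{field_decomposition}; (ii) promote it to a quenched lower bound using convexity of the free energy at $\beta$ together with Borell-type concentration; and (iii) establish a covering lemma turning the quenched overlap bound into the existence of $J(\beta,\eps)$ favorite paths.

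For step (i), differentiate $\log\ZZ_N$ in $\beta$:
\[
\partial_\beta\log\ZZ_N(\beta)=\sum_i Z_{i,N}\int_{\Sigma_N}\vphi_{i,N}(\sigma)\,\nu_{N,\beta}(d\sigma),
\]
and apply Gaussian integration by parts coordinate-wise to each $Z_{i,N}$. Assumption \eqref{variance_assumption} identifies $\sum_i\vphi_{i,N}(\sigma)^2=N$, while the cross term $\sum_i\vphi_{i,N}(\sigma^1)\vphi_{i,N}(\sigma^2)$ equals $N\RR_N(\sigma^1,\sigma^2)$ by \eqref{positive_overlap}. This yields the identity
\[
\E\int_{\Sigma_N^2}\RR_N(\sigma^1,\sigma^2)\,\nu_{N,\beta}^{\otimes 2}(d\sigma^1,d\sigma^2)\;=\;1-\frac{\partial_\beta\E\log\ZZ_N(\beta)}{\beta N}.
\]
Convexity of $\beta\mapsto\E\log\ZZ_N(\beta)$ together with \eqref{free_energy_assumption} forces pointwise convergence of derivatives at every differentiability point of $\PPP$, so the right-hand side converges to $\rho:=1-\PPP'(\beta)/\beta$, which is strictly positive under the hypothesis $\PPP'(\beta)<\beta$.

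For step (ii), the decomposition \eqref{field_decomposition} with \eqref{variance_assumption} shows that $\log\ZZ_N$ is $\beta\sqrt{N}$-Lipschitz as a function of the Gaussian coordinates $(Z_{i,N})_i$ in $\ell^2$-norm, since $\sum_i(\partial_{Z_{i,N}}\log\ZZ_N)^2=\beta^2\sum_i(\int\vphi_{i,N}\,d\nu_{N,\beta})^2\leq\beta^2 N$ by Jensen. The Borell--Sudakov--Tsirelson inequality then gives sub-Gaussian concentration of $\log\ZZ_N/N$ about $\PPP(\beta)$ of rate $O(\beta/\sqrt{N})$. A standard finite-difference argument exploiting convexity---comparing $\log\ZZ_N(\beta\pm\eta)/N$ for small $\eta>0$ and invoking monotonicity of difference quotients---transfers this concentration to the annealed overlap integral, yielding an event $\EE_N$ of $\Pb$-probability at least $1-\eps/2$ on which
\[
\int_{\Sigma_N^2}\RR_N\,d\nu_{N,\beta}^{\otimes 2}\;\geq\;\rho/2.
\]

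Step (iii), the covering step, is the main obstacle. On $\EE_N$, choose $\delta<\rho/4$ and use the pointwise bound $\RR_N\leq\one_{\RR_N\geq\delta}+\delta$ together with Fubini to deduce that the function $\sigma^*\mapsto\nu_{N,\beta}(\{\sigma:\RR_N(\sigma^*,\sigma)\geq\delta\})$ has $\nu_{N,\beta}$-mean at least $\rho/4$; hence there exists a center $\sigma^{(1)}$ whose $\delta$-ball $B_1$ has Gibbs mass at least $\rho/4$. The delicate task is iteration: I would restrict to the conditional Gibbs measure on $B_1^c$, track the residual overlap integral after removing contributions from $B_1\times\Sigma_N$ and $\Sigma_N\times B_1$, and re-apply the one-center argument. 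Assumption \eqref{positive_overlap} plays an essential role here, since non-negativity of $\RR_N$ prevents cancellation in the decomposition of the total overlap across $\{B_1,B_1^c\}$ and allows a clean lower bound on the residual. Showing that this procedure terminates after $J=J(\beta,\eps)$ rounds independent of $N$---so that the uncovered mass drops below $\eps/2$---is the technical core and requires a quantitative analysis of how the residual overlap decreases at each step. Intersecting $\EE_N$ with the event that the covering succeeds produces the required $\Pb$-probability of at least $1-\eps$, concluding the proof.
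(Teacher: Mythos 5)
You should first note that the paper does not prove this statement at all: it is Theorem~1.2 of the cited Bates--Chatterjee paper, imported as a black-box input, and the present paper's contribution begins only after this theorem is granted. Judged on its own merits, your reconstruction has a genuine gap at its core. Step (i) is fine---it is exactly the classical computation behind \eqref{expected_R_limit}. The problem is step (ii): Gaussian integration by parts yields
\[
\E\int_{\Sigma_N\times\Sigma_N}\RR_N\,d\nu_{N,\beta}^{\otimes2}\;=\;1-\frac{\partial_\beta\,\E\log\ZZ_N(\beta)}{\beta N}
\]
only \emph{after} averaging over the disorder. The quenched overlap $\int\RR_N\,d\nu_{N,\beta}^{\otimes2}$ is not equal to $1-\partial_\beta\log\ZZ_N/(\beta N)$ realization by realization; the finite-difference/convexity argument you invoke controls the fluctuations of $\partial_\beta\log\ZZ_N=\int\HH_N\,d\nu_{N,\beta}$, i.e.\ the average energy, which carries no pointwise information about the replica overlap. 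Upgrading ``positive overlap in expectation'' (an event of merely nonzero probability) to ``positive overlap with probability $1-\eps$'' is precisely the hard step that makes this theorem a substantive result---it is the advance of Chatterjee's 2019 paper and of the Bates--Chatterjee paper, as the introduction of the present paper emphasizes---and it cannot be obtained from Borell--Sudakov--Tsirelson concentration plus convexity alone.

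Second, step (iii) is openly incomplete: after extracting the first center $\sigma^{(1)}$, nothing guarantees that the overlap integral restricted to $B_1^c\times B_1^c$ remains bounded below, so the iteration as described may stall while $\nu_{N,\beta}(B_1^c)$ still exceeds $\eps$; assumption \eqref{positive_overlap} prevents cancellation but does not by itself produce a lower bound on the residual. You correctly identify this as the technical core, but identifying it is not supplying it. Since the paper uses the theorem only as an input, the appropriate course is to cite it; a self-contained proof would have to fill exactly the two holes above.
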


Returning to the polymer setting, we consider the following modifications to the random environment: restricting to times inside the interval $\llbrack n_{\ell-1}+1,n_\ell\rrbrack$, and restricting to times outside the interval.
The resulting Hamiltonians will be written as
\eq{
H_N^{(\ell)}(\sigma) &\coloneqq \sum_{i \in \llbrack n_{\ell-1}+1,n_\ell\rrbrack} g_N(i,\sigma_i), \\
\wh H_N^{(\ell)}(\sigma) &\coloneqq \sum_{i \in \llbrack 1,N\rrbrack \setminus \llbrack n_{\ell-1}+1,n_\ell\rrbrack} g_N(i,\sigma_i),
}
where $(g_N(i,x):\, N\geq1, i\in\llbrack1,N\rrbrack, x\in\Z^d)$ is an i.i.d.~collection even across $N$ (recall Remark \ref{independence_remark}).
Therefore, we can partition $\vc g$ into the following pair of independent sub-collections,
\eq{
\vc g^{(\ell)} &\coloneqq \{g_N(i,x) :\, N\geq 1, i\in\llbrack n_{\ell-1}+1,n_\ell \rrbrack,x\in\Z^d\}, \\
\wh{\vc g}^{(\ell)} &\coloneqq \{g_N(i,x) :\, N\geq1, i\in\llbrack 1,N\rrbrack\setminus\llbrack n_{\ell-1}+1,n_\ell\rrbrack,x\in\Z^d\},
}
and then $H_N^{(\ell)}(\cdot)$ is a function of $\vc g^{(\ell)}$, while $\wh H_N^{(\ell)}(\cdot)$ is a function of $\wh{\vc g}^{(\ell)}$.
We will write $\P_{\wh{\vc g}^{(\ell)}}$ to denote the probability measure obtained by conditioning $\P$ on $\wh{\vc g}^{(\ell)}$, and $\E_{\wh{\vc g}^{(\ell)}}$ will denote expectation with respect to $\P_{\wh{\vc g}^{(\ell)}}$ (\textit{i.e.}~integrating over just $\vc g^{(\ell)}$).
While the law of $\vc g^{(\ell)}$ is no different under $\P_{\wh{\vc g}^{(\ell)}}$ than under $\P$, these notational devices will make clearer how we invoke Theorem \ref{previous_thm} and avoid the slightly more cumbersome $\P\givenp{\,\cdot}{\wh{\vc g}^{(\ell)}}$.

Next, for each $\ell\in\llbrack 1,L\rrbrack$ we introduce the following probability measure on $\Sigma$:
\eq{
\wh \mu_{N,\beta}^{(\ell)}(\dd\sigma) = \frac{1}{\wh Z_N^{(\ell)}(\beta)}\e^{\beta\wh H_N^{(\ell)}(\sigma)}\ P(\dd\sigma), \qquad
\wh Z_N^{(\ell)}(\beta) \coloneqq E(\e^{\beta\wh H_N^{(\ell)}(\sigma)}).%\int \e^{\beta\wh H_N^{(\ell)}(\sigma)}\ P(\dd\sigma).
}
Observe that
\eeq{ \label{mu_def_2}
\mu_{N,\beta}(\dd\sigma) &= \frac{1}{Z_N(\beta)}\e^{\beta H_N^{(\ell)}(\sigma)}\e^{\beta \wh H_N^{(\ell)}(\sigma)}\ P(\dd\sigma) \\
&= \frac{\wh Z_N^{(\ell)}(\beta)}{Z_N(\beta)}\e^{\beta H_N^{(\ell)}(\sigma)}\ \wh \mu_{N,\beta}^{(\ell)}(\dd\sigma)
= \frac{1}{Z_N^{(\ell)}(\beta)}\e^{\beta H_N^{(\ell)}(\sigma)}\ \wh \mu_{N,\beta}^{(\ell)}(\dd\sigma),
}
where
\eeq{ \label{without_hat}
Z_N^{(\ell)}(\beta) &\coloneqq %\int_\Sigma \e^{\beta H_N^{(\ell)}(\sigma)}\ \wh\mu_{N,\beta}^{(\ell)}(\dd\sigma) 
%= \frac{1}{\wh Z_N^{(\ell)}(\beta)}\int_\Sigma \e^{\beta H_N(\sigma)}\ P(\dd\sigma)
%= 
\frac{Z_N(\beta)}{\wh Z_N^{(\ell)}(\beta)}.
}
We will ultimately apply Theorem \ref{previous_thm} to this ``restricted'' setting, using
\begin{linenomath}\postdisplaypenalty=0
\begin{align} \label{identifications}
\Sigma_N&=\Sigma \text{ from \eqref{sigma_def}}, &
\nu_N &= \wh\mu_{N,\beta}^{(\ell)}, &
\Pb&=\P_{\wh{\vc g}^{(\ell)}}, &
\HH_N&=H_N^{(\ell)}, \\
a_N&=n_{\ell}-n_{\ell-1}, &
\nu_{N,\beta}&=\mu_{N,\beta}, &
\ZZ_{N}(\beta)&=Z_N^{(\ell)}(\beta), &
\RR_{N}(\cdot,\cdot)&=R^{(\ell)}(\cdot,\cdot). \notag
\end{align}
\end{linenomath}
Note that $\nu_N$ and $\Pb$ are now random measures depending on $\wh{\vc g}^{(\ell)}$, but since the Hamiltonian $\HH_N$ is independent of this randomness, Theorem \ref{previous_thm} will still apply.
%\begin{itemize}
%\item $\Sigma_N=\Sigma$ from \eqref{sigma_def}
%\item $a_N=n_{\ell}-n_{\ell-1}$
%\item $\nu_N = \wh\mu_{N,\beta}^{(\ell)}$ (now a random measure depending on $\wh{\vc g}^{(\ell)}$)
%\item $\Pb=\P_{\wh{\vc g}^{(\ell)}}$
%\item $\HH_N=H_N^{(\ell)}$
%\item $\nu_{N,\beta}=\mu_{N,\beta}$
%\item $\ZZ_{N}(\beta)=Z_N^{(\ell)}(\beta)$
%\item $\RR_{N}(\cdot,\cdot)=R^{(\ell)}(\cdot,\cdot)$
%\end{itemize}
With these choices, we first need to verify \eqref{free_energy_assumption}.

\begin{prop} \label{conditional_free_energy_thm}
For each $\ell \in\llbrack 1,L\rrbrack$ and any $\beta\geq0$, the following statement holds $\P$-almost surely:
\eeq{ \label{without_hat_to_show}
\lim_{N\to\infty} \frac{\log Z_N^{(\ell)}(\beta)}{n_\ell-n_{\ell-1}} = p(\beta) \quad \text{$\P_{\wh{\vc g}^{(\ell)}}$-$\mathrm{a.s.}$ and in $L^\alpha(\P_{\wh{\vc g}^{(\ell)}})$ for all $\alpha\in[1,\infty)$}.
}
\end{prop}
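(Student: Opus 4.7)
My plan is to derive the proposition from Theorem \ref{generalized_free_energy_thm}, the single-temperature convergence \eqref{free_energy_thm_3}, and standard Gaussian concentration. The key identification is that $\hat Z_N^{(\ell)}(\beta) = Z_{\PP_{N,L}}(\vc\beta^{(\ell)})$, where $\vc\beta^{(\ell)}\in[0,\infty)^L$ has $\ell$th coordinate $0$ and all other coordinates equal to $\beta$. Since $p(0) = 0$, Theorem \ref{generalized_free_energy_thm} then gives $N^{-1}\log \hat Z_N^{(\ell)}(\beta) \to \frac{L-1}{L} p(\beta)$ $\P$-a.s. Combined with $N^{-1}\log Z_N(\beta) \to p(\beta)$ from \eqref{free_energy_thm_3}, the identity \eqref{without_hat}, and $N/(n_\ell - n_{\ell-1}) \to L$, I would obtain
\[
\lim_{N\to\infty} \frac{\log Z_N^{(\ell)}(\beta)}{n_\ell - n_{\ell-1}} = p(\beta) \quad \P\text{-a.s.}
\]
Because the limit is deterministic, a Fubini argument yields, for $\P$-a.e.\ $\hat{\vc g}^{(\ell)}$, the same convergence $\P_{\hat{\vc g}^{(\ell)}}$-a.s.

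For the $L^\alpha(\P_{\hat{\vc g}^{(\ell)}})$ part, I would establish Gaussian concentration under $\P_{\hat{\vc g}^{(\ell)}}$ so as to get uniform integrability. Starting from the representation $\log Z_N^{(\ell)}(\beta) = \log\int e^{\beta H_N^{(\ell)}(\sigma)}\,\dd \hat\mu_{N,\beta}^{(\ell)}(\sigma)$, which absorbs the entire $\hat{\vc g}^{(\ell)}$-dependence into the random reference measure $\hat\mu_{N,\beta}^{(\ell)}$, I compute that viewed as a function of $\vc g^{(\ell)}$ alone the partial derivatives are $\beta\mu_{N,\beta}(\sigma_i=x)$ for $i \in \llbrack n_{\ell-1}+1, n_\ell\rrbrack$. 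The squared $\ell^2$-norm of this gradient is at most $\beta^2(n_\ell - n_{\ell-1})$, uniformly in $\hat{\vc g}^{(\ell)}$, and the Borell--Tsirelson--Ibragimov--Sudakov inequality then gives
\[
\P_{\hat{\vc g}^{(\ell)}}\Big(\big|\log Z_N^{(\ell)}(\beta) - \E_{\hat{\vc g}^{(\ell)}}\log Z_N^{(\ell)}(\beta)\big| > t\Big) \leq 2\exp\Big(-\frac{t^2}{2\beta^2(n_\ell - n_{\ell-1})}\Big),
\]
which bounds every centered $L^\alpha(\P_{\hat{\vc g}^{(\ell)}})$ moment of $\log Z_N^{(\ell)}(\beta)/(n_\ell - n_{\ell-1})$ by a constant independent of $\hat{\vc g}^{(\ell)}$ that vanishes as $N \to \infty$.

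To close the argument, I would argue that the conditional mean $\E_{\hat{\vc g}^{(\ell)}}[\log Z_N^{(\ell)}(\beta)]/(n_\ell - n_{\ell-1})$ (which is a deterministic function of $\hat{\vc g}^{(\ell)}$) converges to $p(\beta)$ for $\P$-a.e.\ $\hat{\vc g}^{(\ell)}$. This follows from Borel--Cantelli applied to the concentration bound above, which shows that the variable differs from its conditional mean by $o(1)$ $\P_{\hat{\vc g}^{(\ell)}}$-a.s.; subtracting from the already-proven $\P_{\hat{\vc g}^{(\ell)}}$-a.s.\ convergence to $p(\beta)$ identifies the limit of the conditional mean. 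The triangle inequality and the uniform moment bound then deliver the $L^\alpha(\P_{\hat{\vc g}^{(\ell)}})$ convergence. The main subtlety of the plan is the two-layer randomness: the Lipschitz bound and the resulting concentration must be uniform in $\hat{\vc g}^{(\ell)}$ in order to apply to the conditional law rather than only to the joint law, and this uniformity is precisely what the representation of $\log Z_N^{(\ell)}(\beta)$ as a log-Laplace transform in $\vc g^{(\ell)}$ against the random measure $\hat\mu_{N,\beta}^{(\ell)}$ makes transparent, since $\hat{\vc g}^{(\ell)}$ enters only through that probability measure.
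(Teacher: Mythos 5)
Your proposal is correct, and for the $\P$-almost-sure part it follows the paper exactly: identify $\wh Z_N^{(\ell)}(\beta)$ with the multi-temperature partition function at $\vc\beta^{(\ell)}=(\beta,\dots,\beta,0,\beta,\dots,\beta)$, apply Theorem~\ref{generalized_free_energy_thm}, combine with \eqref{free_energy_thm_3} and \eqref{without_hat}, and disintegrate via Fubini. For the $L^\alpha(\P_{\wh{\vc g}^{(\ell)}})$ part, however, you take a genuinely different route. The paper starts from the \emph{unconditional} concentration \eqref{free_energy_thm_2}, applies Markov's inequality to the random variable $\E_{\wh{\vc g}^{(\ell)}}|\tfrac{1}{N}\log Z_N(\beta)-\tfrac{1}{N}\E\log Z_N(\beta)|^\alpha$, uses Borel--Cantelli with $\alpha>2$ to get a $\P$-a.s.\ $\limsup$ bound, and finally sends $\eps_k\searrow 0$; it then transfers to $Z_N^{(\ell)}$ through \eqref{without_hat} and the a.s.\ determinism of $\wh Z_N^{(\ell)}$ under $\P_{\wh{\vc g}^{(\ell)}}$. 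You instead prove a \emph{conditional} Gaussian concentration inequality directly for $\log Z_N^{(\ell)}(\beta)$ as a function of $\vc g^{(\ell)}$, using the representation with random reference measure $\wh\mu_{N,\beta}^{(\ell)}$, computing $\partial_{g_N(i,x)}\log Z_N^{(\ell)}(\beta)=\beta\,\mu_{N,\beta}(\sigma_i=x)$, and bounding the squared gradient norm by $\beta^2(n_\ell-n_{\ell-1})$ uniformly in $\wh{\vc g}^{(\ell)}$. This yields a sub-Gaussian tail under $\P_{\wh{\vc g}^{(\ell)}}$, hence uniformly vanishing conditional centered moments for every $\alpha\geq1$ at once, and Borel--Cantelli on this conditional tail (summable since $n_\ell-n_{\ell-1}\sim N/L$) recovers the conditional mean's limit; the triangle inequality then closes the $L^\alpha$ statement. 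Your approach is more direct and transparent about where the uniformity in $\wh{\vc g}^{(\ell)}$ comes from, and it sidesteps the $\alpha>2$ restriction plus the nested $\limsup$/$\eps_k$ argument; the paper's route is more elementary in that it reuses \eqref{free_energy_thm_2} as a black box without computing the gradient or invoking the Borell--TIS inequality. Both are valid.
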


\begin{proof}
If $\beta = 0$, then $Z_N^{(\ell)}(0) = 1$ is deterministic, and so \eqref{without_hat_to_show} holds trivially with $p(0)=0$.
Consequently, we may assume $\beta>0$.

By Theorem \ref{generalized_free_energy_thm}, we know
\eeq{ \label{full_and_hat}
\lim_{N\to\infty} \frac{\log Z_N(\beta)}{N} = p(\beta), \qquad \lim_{N\to\infty} \frac{\log \wh Z_N^{(\ell)}(\beta)}{N} = \frac{L-1}{L} p(\beta),
}
where the limits are $\P$-almost sure and in $L^\alpha(\P)$ for every $\alpha\in[1,\infty)$.
By definition \eqref{without_hat} and the fact that $(n_{\ell}-n_{\ell-1}) \sim N/L$, the following limit thus holds in the same senses:
\eeq{ \label{partial_limit}
\lim_{N\to\infty} \frac{\log Z_N^{(\ell)}(\beta)}{n_\ell-n_{\ell-1}} = p(\beta).
}
In particular, since \eqref{partial_limit} holds $\P$-almost surely, Fubini's theorem guarantees the following: For almost every realization of $\wh{\vc g}^{(\ell)}$, \eqref{partial_limit} holds $\P_{\wh{\vc g}^{(\ell)}}$-almost surely.
This proves the first part of \eqref{without_hat_to_show}.

Meanwhile, for any $\eps>0$ and $\alpha\geq 1$, Markov's inequality gives
\eq{
&\P\Big(\E_{\wh{\vc g}^{(\ell)}} \Big|\frac{\log Z_N(\beta)}{N} -\frac{\E\log Z_N(\beta)}{N}\Big|^\alpha>\eps\Big)
\leq \eps^{-1}\E\Big|\frac{\log Z_N(\beta)}{N} - \frac{\E\log Z_N(\beta)}{N}\Big|^\alpha \\
&\stackrefp{free_energy_thm_2}{=} \eps^{-1}\int_0^\infty \P\Big(\Big|\frac{\log Z_N(\beta)}{N} - \frac{\E\log Z_N(\beta)}{N}\Big|^\alpha>u\Big)\ \dd u \\
&\stackref{free_energy_thm_2}{\leq} 2\eps^{-1}\int_0^\infty \exp\Big(-\frac{Nu^{2/\alpha}}{2\beta^2}\Big)\ \dd u
%&\stackrefp{free_energy_thm_2}{=}
= CN^{-\alpha/2},
}
where $C$ depends on $\alpha$ and $\beta$ but not on $N$.
By taking $\alpha > 2$, we can apply Borel--Cantelli to determine that with $\P$-probability one,
\eq{
\limsup_{N\to\infty} \E_{\wh{\vc g}^{(\ell)}} \Big|\frac{\log Z_N(\beta)}{N} - \frac{\E\log Z_N(\beta)}{N}\Big|^\alpha \leq \eps.
}
By taking a countable sequence $\eps_k\searrow0$, we further deduce
\eeq{ \label{further_deduction}
\lim_{N\to\infty} \E_{\wh{\vc g}^{(\ell)}} \Big|\frac{\log Z_N(\beta)}{N} - \frac{\E\log Z_N(\beta)}{N}\Big|^\alpha = 0 \qquad \text{$\P$-$\mathrm{a.s.}$}
}
Since \eqref{free_energy_thm_1} gives the deterministic limit $\frac{1}{N}\E\log Z_N(\beta)\to p(\beta)$, it follows from \eqref{further_deduction} that with $\P$-probability one we have 
\begin{subequations} \label{Lalpha}
\eeq{ 
\lim_{N\to\infty}\frac{1}{N}\log Z_N(\beta)= p(\beta) \quad \text{in $L^\alpha(\P_{\wh{\vc g}^{(\ell)}})$}.
}
Moreover, given that $\alpha$ can be taken arbitrarily large, this convergence occurs in $L^\alpha(\P_{\wh{\vc g}^{(\ell)}})$ simultaneously for all $\alpha\in[1,\infty)$.
On the other hand, from \eqref{full_and_hat} we know
\eeq{ 
\lim_{N\to\infty}\frac{1}{N}\log \wh Z_N^{(\ell)}(\beta) = \frac{L-1}{L}p(\beta),
}
\end{subequations}
also with $\P$-probability one.
Furthermore, since $\wh Z_n^{(\ell)}(\beta)$ is determined entirely by $\wh{\vc g}^{(\ell)}$, this last limit is a deterministic statement with respect to $\P_{\wh{\vc g}^{(\ell)}}$; in particular, it holds in $L^\alpha(\P_{\wh{\vc g}^{(\ell)}})$.
From \eqref{without_hat}, \eqref{Lalpha}, and the fact that $(n_{\ell}-n_{\ell-1}) \sim N/L$, we now have
\eq{
\lim_{N\to\infty} \E_{\wh{\vc g}^{(\ell)}}\Big|\frac{\log Z_N^{(\ell)}(\beta)}{n_\ell-n_{\ell-1}} - p(\beta)\Big|^\alpha = 0 \qquad \text{$\P$-$\mathrm{a.s.}$}
}
We have thus verified both parts of \eqref{without_hat_to_show}.
\end{proof}

Given Proposition \ref{conditional_free_energy_thm}, we can make a statement approaching Theorem \ref{weaker_main_thm}.
The following result asserts that once the system size becomes large enough, the ``external'' disorder $\wh{\vc g}^{(\ell)}$ becomes sufficiently well behaved so that when only the ``internal'' disorder $\vc g^{(\ell)}$ is regarded as random, the polymer along the subinterval $\llbrack n_{\ell-1}+1,n_\ell\rrbrack$ admits the same localization statement as in Theorem \ref{previous_thm_polymer}.

\begin{cor} \label{ptwise_thm}
Let $\ell\in\llbrack 1,L\rrbrack$, and assume $\beta>0$ is a point of differentiability for $p(\cdot)$ with $p'(\beta) < \beta$.
Then with $\P$-probability one, the following is true.
For every $\eps > 0$, there exist integers $J = J(\beta,\eps,\wh{\vc g}^{(\ell)})$ and $N_* = N_*(\beta,\eps,\wh{\vc g}^{(\ell)})$ and a number $\delta = \delta(\beta,\eps,\wh{\vc g}^{(\ell)})>0$ such that for all $N\geq N_*$, the following event has
$\P_{\wh{\vc g}^{(\ell)}}$-probability at least $1-\eps$:
\eq{
\Esf_{N,J,\delta,\eps}^{(\ell)} \coloneqq \bigg\{\exists\, \sigma^1,\dots,\sigma^J\in\Sigma :\, \mu_{N,\beta}\bigg(\bigcup_{j=1}^J \big\{\sigma\in\Sigma:\, R^{(\ell)}(\sigma^{j},\sigma)\geq\delta\big\}\bigg) \geq 1 - \eps\bigg\}.
}
\end{cor}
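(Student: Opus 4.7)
The strategy is to apply Theorem \ref{previous_thm} conditionally on $\wh{\vc g}^{(\ell)}$. Concretely, we make the identifications already listed just before the statement of the corollary: for a $\P$-typical realization of $\wh{\vc g}^{(\ell)}$, let the reference measure be $\nu_N = \wh\mu_{N,\beta}^{(\ell)}$ (this is a measurable function of $\wh{\vc g}^{(\ell)}$ alone, hence deterministic under $\P_{\wh{\vc g}^{(\ell)}}$), let $\HH_N(\sigma) = H_N^{(\ell)}(\sigma)$, and note that under $\P_{\wh{\vc g}^{(\ell)}}$ the latter is a centered Gaussian field on $\Sigma$. The rescaling $N \leadsto n_\ell - n_{\ell-1}$ of the abstract framework gives, by \eqref{mu_def_2}, that the associated Gibbs measure $\nu_{N,\beta}$ is exactly $\mu_{N,\beta}$, and that the correlation-overlap $\RR_{n_\ell-n_{\ell-1}}(\sigma^1,\sigma^2)$ is exactly $R^{(\ell)}(\sigma^1,\sigma^2)$. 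Hence the conclusion of Theorem \ref{previous_thm} is, verbatim, the event $E^{(\ell)}_{N,J,\delta,\eps}$.

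\textbf{Easy hypotheses.} Conditions \eqref{variance_assumption}--\eqref{field_decomposition} are immediate. Indeed, $H_N^{(\ell)}(\sigma)$ is the sum of $n_\ell - n_{\ell-1}$ i.i.d.\ standard normals, so its variance matches the required $n_\ell - n_{\ell-1}$; its covariance is $\Cov(H_N^{(\ell)}(\sigma^1),H_N^{(\ell)}(\sigma^2)) = \sum_{i=n_{\ell-1}+1}^{n_\ell}\one_{\{\sigma_i^1=\sigma_i^2\}} \geq 0$; and the series decomposition $H_N^{(\ell)}(\sigma) = \sum_{i,x} g_N(i,x)\one_{\{\sigma_i=x\}}$ over the countable index set $\llbrack n_{\ell-1}+1,n_\ell\rrbrack\times\Z^d$ realizes \eqref{field_decomposition}.

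\textbf{The main obstacle: verifying \eqref{free_energy_assumption}.} We must exhibit a single $\P$-full-measure event on which, for \emph{every} $\beta' \geq 0$ simultaneously,
\eq{
\frac{\log \ZZ_N(\beta')}{n_\ell-n_{\ell-1}} \longrightarrow \PPP(\beta') \quad \text{$\P_{\wh{\vc g}^{(\ell)}}$-a.s.\ and in $L^1(\P_{\wh{\vc g}^{(\ell)}})$,}
}
where $\ZZ_N(\beta') = \int e^{\beta' H_N^{(\ell)}(\sigma)}\,\wh\mu_{N,\beta}^{(\ell)}(\dd\sigma)$. Writing $\ZZ_N(\beta') = Z_{\PP_{N,L}}(\vc\beta^{\ell,\beta'})/\wh Z_N^{(\ell)}(\beta)$, where $\vc\beta^{\ell,\beta'}$ is the temperature vector with $\beta'$ in the $\ell$-th slot and $\beta$ in every other slot, Theorem \ref{generalized_free_energy_thm} applied to numerator and denominator gives, for each fixed $\beta' \geq 0$,
\eq{
\lim_{N\to\infty}\frac{\log \ZZ_N(\beta')}{n_\ell - n_{\ell-1}} = L\Big(\frac{(L-1)p(\beta) + p(\beta')}{L} - \frac{(L-1)p(\beta)}{L}\Big) = p(\beta') \quad \text{$\P$-a.s.\ and in $L^\alpha(\P)$.}
}
The same Fubini/Markov/Borel--Cantelli steps as in the proof of Proposition \ref{conditional_free_energy_thm} (which only use the concentration estimate \eqref{generalized_free_energy_thm_1}) upgrade this to $\P_{\wh{\vc g}^{(\ell)}}$-a.s.\ and $L^\alpha(\P_{\wh{\vc g}^{(\ell)}})$ convergence for $\P$-a.e.\ $\wh{\vc g}^{(\ell)}$, at that one value of $\beta'$. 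Intersecting over $\beta' \in \Q_{\geq 0}$ gives a single $\P$-full-measure event $\Omega_0$ on which this convergence holds for every rational $\beta' \geq 0$. Finally, $\beta' \mapsto \frac{\log\ZZ_N(\beta')}{n_\ell - n_{\ell-1}}$ is convex (it is a log-MGF) and the limit $p(\cdot)$ is convex hence continuous on $[0,\infty)$; the classical fact that pointwise convergence of convex functions on a dense set to a continuous function extends to the whole interval promotes the dense convergence on $\Omega_0$ to convergence at every $\beta' \geq 0$. This verifies \eqref{free_energy_assumption} with $\PPP = p$, which by hypothesis is differentiable at $\beta$ with $p'(\beta) < \beta$, so Theorem \ref{previous_thm} applies and produces the desired event $E^{(\ell)}_{N,J,\delta,\eps}$ for every $\wh{\vc g}^{(\ell)} \in \Omega_0$.
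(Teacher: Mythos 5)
Your approach is essentially the same as the paper's: apply the abstract Theorem~\ref{previous_thm} conditionally on $\wh{\vc g}^{(\ell)}$, using exactly the identifications listed just before the corollary, with \eqref{variance_assumption}--\eqref{field_decomposition} being immediate and \eqref{free_energy_assumption} being the main thing to check. Two points of comparison are worth making. On the positive side, you are more careful than the paper's own proof about verifying \eqref{free_energy_assumption}. That hypothesis requires convergence of $\ZZ_N(\beta')$ for \emph{every} $\beta' \ge 0$ with the reference measure $\nu_N = \wh\mu^{(\ell)}_{N,\beta}$ held fixed at the specific $\beta$ of interest, so $\ZZ_N(\beta')$ is the mixed quantity $Z_{\PP_{N,L}}(\vc\beta^{\ell,\beta'})/\wh Z^{(\ell)}_N(\beta)$ with $\beta'$ only in the $\ell$-th slot. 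The paper simply invokes Proposition~\ref{conditional_free_energy_thm}, which as stated concerns the diagonal quantity $Z^{(\ell)}_N(\beta')=Z_N(\beta')/\wh Z^{(\ell)}_N(\beta')$ (same temperature on both sides) and, even at a single $\beta'$, has a $\P$-exceptional set that a priori depends on $\beta'$. Your route — apply Theorem~\ref{generalized_free_energy_thm} directly to the mixed temperature vector, repeat the Fubini/Markov/Borel--Cantelli argument of Proposition~\ref{conditional_free_energy_thm} on that quantity, intersect over rational $\beta'$, and use convexity of the log-moment-generating function to extend to all $\beta'$ — produces a single $\P$-full-measure set on which \eqref{free_energy_assumption} holds in full, which is the clean way to set up the application of Theorem~\ref{previous_thm}.

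On the other hand, you omit a small point the paper does address: the effective size $n_\ell(N)-n_{\ell-1}(N)$ does not advance by exactly one with each unit increment of $N$, so the normalization $\Var \HH_N(\sigma)=N$ in \eqref{variance_assumption} does not line up with the abstract framework as $N$ runs through all positive integers. The paper fixes this by partitioning $\N$ into finitely many subsequences along which $n_\ell-n_{\ell-1}$ does increase by exactly one, applying Theorem~\ref{previous_thm} on each subsequence, and then taking the worst $J$, $N_*$, $\delta$ over the finitely many cases. The paper calls this ``inconsequential,'' but you should still say it to make the hypotheses of Theorem~\ref{previous_thm} match verbatim.
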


\begin{proof}
Because $\wh{\vc g}^{(\ell)}$ and $\vc g^{(\ell)}$ are independent, the law of the latter given the former remains i.i.d.~standard normal.
Therefore, \eqref{mu_def_2} is a representation of $\mu_{N,\beta}$ in the form of \eqref{nu_def}.
Proposition \ref{conditional_free_energy_thm} verifies that in this representation, the assumption \eqref{free_energy_assumption} holds.
Also, it is trivial to check that
\eq{
\Cov(H_N^{(\ell)}(\sigma^1),H_N^{(\ell)}(\sigma^2)) = (n_\ell-n_{\ell-1})R^{(\ell)}(\sigma^1,\sigma^2) \geq 0, \quad \sigma^1,\sigma^2\in\Sigma.
}
In particular, we have
\eeq{ \label{variance_assumption_new}
\Var H_N^{(\ell)}(\sigma) = n_{\ell}-n_{\ell-1}, \quad \sigma\in\Sigma.
}
Thus \eqref{variance_assumption}--\eqref{field_decomposition} also hold, and we can
apply Theorem \ref{previous_thm} with the identifications in \eqref{identifications} to obtain the result.
%
%There is the inconsequential subtlety that $n_{\ell}(N+1)-n_{\ell-1}(N+1)$ may be equal to $n_\ell(N)-n_{\ell-1}(N)$, and so \eqref{variance_assumption_new} is not in exact agreement with \eqref{variance_assumption} as we increment $N$ by units of $1$. 
%Nevertheless, it is possible to consider finitely many sequences $(N^{(k)}_i)_{i\geq1}$, $k\in\llbrack 1,K\rrbrack$, such that
%\eq{
%n_\ell(N^{(k)}_{i+1}) - n_{\ell-1}(N^{(k)}_{i+1}) = n_\ell(N^{(k)}_i) - n_{\ell-1}(N^{(k)}_i)+1 \quad \text{for all $i\geq1$}, %$\ell\in\llbrack 1,L\rrbrack$},
%}
%as needed for \eqref{variance_assumption}, and
%\eq{
%\{1,2,\dots\} \subset \bigcup_{k=1}^K \bigcup_{i=1}^\infty \{N^{(k)}_i\}.
%}
%Applying Proposition \ref{conditional_free_energy_thm} to these $K$ subsequences would yield $J = J(k)$, $N_* = N_*(k)$, and $\delta = \delta(k)$.
%Then the desired final statement holds with
%\eq{
%J = \max_{k=1,\dots,K} J(k), \qquad N_* = \max_{k=1,\dots,K}  N_*(k), \qquad \delta = \min_{k=1,\dots,K}  \delta(k).
%\tag*{\qedhere}
%}
\end{proof}

Recall that $\vc g$ is supported on a probability space we denote $(\Omega,\FF,\P)$.
Following Corollary \ref{ptwise_thm}, we can define the event
\eeq{ \label{hat_event}
\wh \Esf_{N,J,\delta,\eps}^{(\ell)} \coloneqq \{\P_{\wh{\vc g}^{(\ell)}}(\Esf_{N,J,\delta,\eps}^{(\ell)} ) \geq 1-\eps\}.
}
Let us postpone verification that such an event is measurable, and proceed directly to the proof of Theorem \ref{weaker_main_thm}.

\begin{proof}[Proof of Theorem \ref{weaker_main_thm}]
Let $\eps > 0$ and $L$ be given.
In the notation of Corollary \ref{ptwise_thm}, it suffices to find $J$, $N_*$, and $\delta$ such that
\eeq{ \label{weaker_main_thm_to_show}
\P\bigg(\bigcap_{\ell=1}^L \Esf_{N,J,\delta,\eps/L}^{(\ell)}\bigg) \geq 1-\eps \quad \text{for all $N\geq N_*$,}
}
since the event $\bigcap_{\ell=1}^L \Esf_{N,J,\delta,\eps/L}^{(\ell)}$ implies the existence of $\sigma^1,\dots,\sigma^{JL} \in \Sigma$ such that
\eq{
\mu_{N,\beta}\bigg(\bigcap_{\ell=1}^L\bigcup_{j=1}^{JL} \{\sigma\in\Sigma :\, R^{(\ell)}(\sigma^j,\sigma) \geq \delta\}\bigg)\geq 1-\eps.
}
The conclusion of Theorem \ref{weaker_main_thm} then follows by replacing $J$ with $JL$.
The remainder of the proof is thus establishing \eqref{weaker_main_thm_to_show}.

Let $\eps' = \eps'(\eps,L)$ be a positive number to be specified later. 
Take $(\delta_k)_{k\geq1}$ to be any decreasing sequence tending to $0$ as $k\to\infty$.
From Corollary \ref{ptwise_thm}, we know
\eq{
\P\bigg(\bigcup_{J = 1}^\infty\bigcup_{k=1}^\infty \bigcup_{N_*=1}^\infty\bigcap_{N=N_*}^\infty \wh \Esf_{N,J,\delta_k,\eps'}^{(\ell)}\bigg) = 1 \quad \text{for each $\ell\in\llbrack 1,L\rrbrack$}.
}
Since $\wh \Esf_{N,J,\delta_k,\eps'} \subset \wh \Esf_{N,J+1,\delta_k,\eps'}$, we can choose $J = J(\beta,\eps',L)$ sufficiently large that
\eq{
\P\bigg(\bigcup_{k=1}^\infty \bigcup_{N_*=1}^\infty\bigcap_{N=N_*}^\infty \wh \Esf_{N,J,\delta_k,\eps'}^{(\ell)}\bigg) \geq 1 - \eps' \quad \text{for each $\ell\in\llbrack 1,L\rrbrack$}.
}
By the assumption $\delta_{k}>\delta_{k+1}$, we also have $\wh \Esf_{N,J,\delta_k,\eps'}^{(\ell)} \subset \wh \Esf_{N,J,\delta_{k+1},\eps'}^{(\ell)}$, and so we can choose $k = k(\beta,\eps',L,J)$ sufficiently large that
\eq{
\P\bigg(\bigcup_{N_*=1}^\infty\bigcap_{N=N_*}^\infty \wh \Esf_{N,J,\delta_k,\eps'}^{(\ell)}\bigg) \geq 1 - 2\eps' \quad \text{for each $\ell\in\llbrack 1,L\rrbrack$}.
}
Henceforth we simply write $\delta = \delta_k$.
Finally, we choose $N_* = N_*(\beta,\eps',L,J,\delta)$ sufficiently large that
\eq{
\P\bigg(\bigcap_{N=N_*}^\infty \wh \Esf_{N,J,\delta,\eps'}^{(\ell)}\bigg) \geq 1 - 3\eps' \quad \text{for each $\ell\in\llbrack 1,L\rrbrack$}.
}
Now, whether or not $\wh \Esf_{N,J,\delta,\eps'}^{(\ell)}$ occurs depends only on $\wh{\vc g}^{(\ell)}$; see Proposition \ref{measurability_prop}.
By definition \eqref{hat_event}, when $\wh \Esf_{N,J,\delta,\eps'}^{(\ell)}$ does occur, the event $\Esf_{N,J,\delta,\eps'}^{(\ell)}$ has $\P_{\wh{\vc g}^{(\ell)}}$-probability at least $1-\eps'$.
Therefore, by our choice of $N_*$, we have
\eq{
\P(\Esf_{N,J,\delta,\eps'}^{(\ell)}) &= \E(\P_{\wh{\vc g}^{(\ell)}}(\Esf_{N,J,\delta,\eps'}^{(\ell)})) \\
&\geq \E\big(\one_{\wh \Esf_{N,J,\delta,\eps'}^{(\ell)}}\P_{\wh{\vc g}^{(\ell)}}(\Esf_{N,J,\delta,\eps'}^{(\ell)})\big) \\
&\geq (1-3\eps')(1-\eps') \geq 1-4\eps' \quad \text{for each $\ell\in\llbrack 1,L\rrbrack$, $N\geq N_*$},
}
and thus
\eq{
\P\bigg(\bigcap_{\ell=1}^L \Esf_{N,J,\delta,\eps'}^{(\ell)}\bigg) \geq 1-4L\eps' \quad \text{for all $N\geq N_*$}.
}
To complete the proof, we set $\eps' = \eps/(4L)$ and note that
\eq{
\P\bigg(\bigcap_{\ell=1}^L \Esf_{N,J,\delta,\eps}^{(\ell)}\bigg)
\geq \P\bigg(\bigcap_{\ell=1}^L \Esf_{N,J,\delta,\eps'}^{(\ell)}\bigg) \geq 1-\eps \quad \text{for all $N\geq N_*$}.
\tag*{\qedhere}
}
\end{proof}

To conclude the section, we return to the technical issue of measurability for the event $\wh \Esf^{(\ell)}_{N,J,\delta,\eps}$ defined in \eqref{hat_event}.
Let $\wh\FF^{(\ell)}$ denote the sub-sigma-algebra of $\FF$ generated by $\wh{\vc g}^{(\ell)}$.

\begin{prop} \label{measurability_prop}
For any integers $N \geq L$, $J\geq 1$, $\ell\in\llbrack 1,L\rrbrack$, and numbers $\delta,\eps>0$, we have $\wh \Esf_{N,J,\delta,\eps}^{(\ell)} \in \wh\FF^{(\ell)}$.
\end{prop}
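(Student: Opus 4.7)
The plan is to first verify $E_{N,J,\delta,\eps}^{(\ell)} \in \FF$ and then deduce $\wh E_{N,J,\delta,\eps}^{(\ell)} \in \FF$ by applying Fubini's theorem to pass from measurability of the joint event to measurability of the conditional probability. The essential difficulty is the uncountable existential quantifier over $\Sigma$ appearing in the definition of $E_{N,J,\delta,\eps}^{(\ell)}$. I would resolve this by observing that $R^{(\ell)}(\sigma^j,\sigma)$ depends on $\sigma^j$ and $\sigma$ only through coordinates $n_{\ell-1}+1,\dots,n_\ell$; in particular, the cylinder set $A(\tau) \coloneqq \{\sigma \in \Sigma : R^{(\ell)}(\tau,\sigma) \geq \delta\}$ is unchanged if $\tau$ is modified outside these coordinates. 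Since there are only finitely many (at most $(2d)^{n_\ell}$) admissible initial segments $(\sigma_1,\dots,\sigma_{n_\ell})$ of paths in $\Sigma$, the existential quantifier over the $J$-tuple $(\sigma^1,\dots,\sigma^J)$ collapses to a finite disjunction.

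With this reduction in hand, I would write
\[
E_{N,J,\delta,\eps}^{(\ell)} = \bigcup_{(\tau^1,\dots,\tau^J)} \Big\{ \mu_{N,\beta}\Big(\bigcup_{j=1}^J A(\tau^j)\Big) \geq 1-\eps\Big\},
\]
where the outer union is over $J$-tuples of admissible length-$n_\ell$ initial segments. For each fixed tuple, the Gibbs mass
\[
\mu_{N,\beta}\Big(\bigcup_{j=1}^J A(\tau^j)\Big) = \frac{E\big(\e^{\beta H_N(\sigma)}\one_{\bigcup_j A(\tau^j)}\big)}{Z_N(\beta)}
\]
is a ratio of finite sums of exponentials in $\vc g$ (both numerator and denominator being $P$-expectations of bounded cylinder functions in $\sigma$, hence finite sums over admissible length-$N$ path segments), and therefore a Borel-measurable function of $\vc g$. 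Taking a finite union of superlevel sets of such functions yields a set in $\FF$, so $E_{N,J,\delta,\eps}^{(\ell)} \in \FF$.

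Finally, to deduce $\wh E_{N,J,\delta,\eps}^{(\ell)} \in \FF$, I use that $\vc g^{(\ell)}$ and $\wh{\vc g}^{(\ell)}$ are independent, so that Fubini's theorem gives
\[
\P_{\wh{\vc g}^{(\ell)}}\big(E_{N,J,\delta,\eps}^{(\ell)}\big) = \int \one_{E_{N,J,\delta,\eps}^{(\ell)}}\big(\vc g^{(\ell)}, \wh{\vc g}^{(\ell)}\big)\ \dd \P_{\vc g^{(\ell)}},
\]
and the right-hand side is a measurable function of $\wh{\vc g}^{(\ell)}$. The preimage of $[1-\eps,1]$ under this map is precisely $\wh E_{N,J,\delta,\eps}^{(\ell)}$, and thus lies in $\FF$. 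The only non-routine step in the proof is the reduction to a finite existential quantifier in the first paragraph; once that is accomplished, everything else is standard measure theory.
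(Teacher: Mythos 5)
Your proof is correct, and it takes a genuinely simpler route than the paper's. Both arguments begin with the same observation that the existential quantifier over $(\sigma^1,\dots,\sigma^J)$ collapses to a finite disjunction (the paper restricts to the finite set $\Sigma_N$ of $N$-step paths; you further note that only the segment $\llbrack n_{\ell-1}+1,n_\ell\rrbrack$ matters, but either finite reduction suffices). Both then get $E^{(\ell)}_{N,J,\delta,\eps}\in\FF$ by noting that each Gibbs mass is a ratio of finite positive sums of exponentials in the disorder. The divergence is in the second half: the paper writes the maximum as a monotone $L^q$-limit, then establishes \emph{continuity} of $\wh{\vc g}^{(\ell)}\mapsto\P_{\wh{\vc g}^{(\ell)}}\big((\cdots)^{1/q}\ge 1-\eps-\tfrac1n\big)$ via Lemma \ref{rational_lemma} (rational functions vanish on null sets) and Lemma \ref{continuous_lemma} (continuity of parametrized tail probabilities), and finally passes to the limit to get measurability. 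You instead go directly: since the disorder law is a product measure and $E^{(\ell)}_{N,J,\delta,\eps}$ is a jointly Borel subset of (the relevant finite-dimensional projection of) the product space, Fubini's theorem immediately gives $\sigma(\wh{\vc g}^{(\ell)})$-measurability of $\wh{\vc g}^{(\ell)}\mapsto\P_{\wh{\vc g}^{(\ell)}}(E^{(\ell)}_{N,J,\delta,\eps})$, from which $\wh E^{(\ell)}_{N,J,\delta,\eps}\in\FF$ follows as a preimage. Your argument requires only measurability and entirely sidesteps Lemmas \ref{rational_lemma} and \ref{continuous_lemma}; the paper's argument is heavier but yields the extra (unused) conclusion that the conditional probability is a pointwise limit of continuous functions of $\wh{\vc g}^{(\ell)}$. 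For the purposes of Proposition \ref{measurability_prop}, yours is the more economical proof.
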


We will make use of two lemmas.

\begin{lemma} \label{rational_lemma}
If $r$ is a non-constant rational function in $n$ real variables, then for any $t\in\R$, the set $\{\vc x\in\R^n :\, r(\vc x) = t\}$ has zero Lebesgue measure.
\end{lemma}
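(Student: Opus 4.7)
The plan is to reduce the statement to the classical fact that the zero set of a non-zero polynomial in $n$ variables has Lebesgue measure zero, and then to prove that fact by induction on $n$ using Fubini's theorem.

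First I would write $r = p/q$ for polynomials $p, q$ with $q \not\equiv 0$, so that the natural domain of $r$ is $\{\vc x \in \R^n : q(\vc x) \neq 0\}$. Since the level set $\{r = t\}$ is contained in $\{p - t q = 0\} \cup \{q = 0\}$, it suffices to show that the zero set of each of the non-zero polynomials $p - t q$ and $q$ has Lebesgue measure zero. For $q$ this is immediate; for $p - t q$, the key point is that this polynomial is not identically zero: otherwise, $p/q \equiv t$ on $\{q \neq 0\}$, contradicting the hypothesis that $r$ is non-constant.

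Hence the entire claim reduces to the following statement, which I would prove by induction on $n$: if $P$ is a non-zero polynomial in $n$ real variables, then $\{\vc x \in \R^n : P(\vc x) = 0\}$ has Lebesgue measure zero. The base case $n = 1$ is clear, since a non-zero polynomial in one variable has at most $\deg P$ roots. For the inductive step, write
\[
P(x_1, \dots, x_n) = \sum_{k=0}^d a_k(x_1, \dots, x_{n-1})\, x_n^k,
\]
where $d$ is the highest power of $x_n$ appearing in $P$, so that $a_d$ is a non-zero polynomial in $n-1$ variables. By the inductive hypothesis, $\{a_d = 0\}$ has Lebesgue measure zero in $\R^{n-1}$. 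For each $(x_1, \dots, x_{n-1})$ with $a_d(x_1, \dots, x_{n-1}) \neq 0$, the polynomial $P(x_1, \dots, x_{n-1}, \cdot)$ in $x_n$ is non-zero of degree $d$ and thus has at most $d$ roots; its zero set therefore has one-dimensional Lebesgue measure zero. Fubini's theorem then yields the desired conclusion for $n$.

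The argument is elementary and I do not anticipate a serious obstacle; the main thing to be careful about is the reduction step, where one must verify that $p - t q$ is a non-zero polynomial (using non-constancy of $r$, not merely that $p$ is non-zero) and handle the measure-zero set $\{q = 0\}$ on which $r$ is not even defined.
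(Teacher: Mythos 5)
Your proposal is correct and takes essentially the same route as the paper: write $r = f/g$, reduce the level set $\{r = t\}$ to the zero set of the non-zero polynomial $f - tg$ (together with the measure-zero set $\{g = 0\}$), and invoke the fact that a non-zero polynomial in $n$ real variables vanishes only on a set of Lebesgue measure zero. The only difference is that you supply an inductive Fubini proof of that last fact, whereas the paper simply cites a reference for it; your remark that one must use non-constancy of $r$ (not merely $f \neq 0$) to ensure $f - tg \not\equiv 0$ is exactly the point the paper also makes.
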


\begin{proof}
Let us write $r = f/g$, where $f$ and $g$ are polynomials.
Then $r - t = (f-tg)/g$, which vanishes if and only if $f - tg = 0$.
By hypothesis, $f - tg$ is not identically equal to $0$, and so this polynomial may only vanish on a set of Lebesgue measure zero \cite{eldredge16}.
\end{proof}

\begin{lemma} \label{continuous_lemma}
Let $\vc X \in \R^n$ be a random vector supported on $(\Omega,\FF,\P)$.
Suppose that $f \colon \R^{n+m} \to \R$ is a continuous function such that
\eq{
\P(f(\vc X,\vc y) = t) = 0 \quad \text{for any $\vc y\in\R^m$, $t\in\R$}.
}
Then the map
$\vc y \mapsto \P(f(\vc X,\vc y) \geq t)$
is continuous from $\R^m$ to $[0,1]$, for any $t\in\R$.
\end{lemma}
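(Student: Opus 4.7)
The plan is to establish continuity via pointwise convergence of indicators plus dominated convergence. Fix $\vc y \in \R^m$, $t \in \R$, and let $(\vc y_k)_{k\geq1}$ be any sequence in $\R^m$ with $\vc y_k \to \vc y$. The goal is to show that the indicator random variables
\eq{
X_k \coloneqq \one_{\{f(\vc X,\vc y_k) \geq t\}}
}
converge $\P$-almost surely to $X \coloneqq \one_{\{f(\vc X,\vc y) \geq t\}}$. Since $|X_k| \leq 1$, dominated convergence then gives $\P(f(\vc X,\vc y_k) \geq t) = \E X_k \to \E X = \P(f(\vc X,\vc y) \geq t)$, and because the sequence was arbitrary this yields continuity.

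The almost-sure convergence is where the non-concentration hypothesis enters. By assumption, the event $\Omega_0 \coloneqq \{\omega : f(\vc X(\omega),\vc y) \neq t\}$ has probability one. Fix $\omega \in \Omega_0$. If $f(\vc X(\omega),\vc y) > t$, then continuity of $f$ at the point $(\vc X(\omega),\vc y) \in \R^{n+m}$ provides a neighborhood on which $f > t$; for all sufficiently large $k$ the point $(\vc X(\omega),\vc y_k)$ lies in this neighborhood, so $X_k(\omega) = 1 = X(\omega)$. If instead $f(\vc X(\omega),\vc y) < t$, the same argument (with strict inequality $<$) yields $X_k(\omega) = 0 = X(\omega)$ for large $k$. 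Thus $X_k \to X$ on $\Omega_0$, which is a full-probability event.

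The only subtle step is recognizing that the hypothesis $\P(f(\vc X,\vc y) = t) = 0$ is exactly what is needed to discard the boundary $\{f(\vc X,\vc y) = t\}$, where the indicators could otherwise oscillate between $0$ and $1$ depending on the sign of $f(\vc X,\vc y_k) - t$. Without this hypothesis the map could fail to be continuous (e.g., if $\vc X$ were a point mass placing positive weight on a set where $f(\cdot,\vc y) \equiv t$). So the main conceptual content of the lemma is just this observation; there is no real obstacle beyond carefully invoking continuity of $f$ on a set of full measure and then applying dominated convergence.
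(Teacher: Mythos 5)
Your proof is correct, but it takes a genuinely different route from the paper's. You argue by sequences: fix $\vc y_k\to\vc y$, show that on the full-measure event $\{f(\vc X,\vc y)\neq t\}$ the indicators $\one_{\{f(\vc X,\vc y_k)\geq t\}}$ converge pointwise to $\one_{\{f(\vc X,\vc y)\geq t\}}$ (since strict inequalities are preserved under small perturbations, by continuity of $f$ at $(\vc X(\omega),\vc y)$), and then invoke dominated convergence. The paper instead runs a direct $\eps$--$\delta$ estimate: it uses the no-atom hypothesis to pick $h>0$ with $\P(f(\vc X,\vc y)\in[t-h,t+h])<\eps$, tightness of $\vc X$ to pick a compact $K$ with $\P(\vc X\notin K)<\eps$, and then uniform continuity of $f$ on $K\times(\vc y+[-1,1]^m)$ to find the $\delta$, yielding $|\P(f(\vc X,\vc y+\vc\delta)\geq t)-\P(f(\vc X,\vc y)\geq t)|\leq 2\eps$. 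Your approach is slightly more parsimonious --- it needs only pointwise continuity of $f$ and avoids invoking tightness and uniform continuity --- and it makes the role of the no-atom hypothesis especially transparent (it is exactly what kills the boundary set where the indicators could oscillate). The paper's approach has the minor advantage of producing an explicit quantitative estimate in the intermediate step, though neither proof retains that quantitative information in the stated conclusion. Both are complete and correct.
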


\begin{proof}
Fix $\vc y\in\R^m$ and $t\in\R$, and let $\eps>0$ be given.
By hypothesis, we can choose $h>0$ so small that $\P(f(\vc X,\vc y) \in [t - h,t+h]) < \eps$.
Next choose $K\subset\R^n$ to be a compact set sufficiently large that $\P(\vc X\notin K) < \eps$.
By uniform continuity of $f$ on $K \times (\vc y + [-1,1]^m)$, we may choose $\delta > 0$ sufficiently small that
\eq{
\vc X \in K, \vc\delta\in\R^m, \|\vc\delta\| < \delta \quad \implies \quad |f(\vc X,\vc y+\vc \delta) - f(\vc X,\vc y)| \leq h.
}
It now follows that whenever $\|\vc\delta\| < \delta$, we have
\eq{
\P(f(\vc X,\vc y+\vc \delta) \geq t) &\geq \P(\vc X \in K, f(\vc X,\vc y) \geq t + h) 
\geq \P(f(\vc X,\vc y) \geq t) - 2\eps,
}
as well as
\eq{
\P(f(\vc X,\vc y+\vc \delta) < t)  
\geq \P(\vc X\in K,f(\vc X,\vc y) < t-h) 
\geq \P(f(\vc X,\vc y) < t) - 2\eps.
}
The two previous displays together imply
\eq{
|\P(f(\vc X,\vc y+\vc \delta) \geq t) - \P(f(\vc X,\vc y) \geq t)| \leq 2\eps.
}
As $\eps$ is arbitrary, the continuity of the map $\vc y \mapsto \P(f(\vc X,\vc y) \geq t)$ has been proved.
\end{proof}

\begin{proof}[Proof of Proposition \ref{measurability_prop}]

Recall the set $D_i = \{x\in\Z^d :\, P(\sigma_i = x) > 0\}$, for $i\geq1$.
For every finite $N$, the random Hamiltonian $H_N(\cdot)$ depends only on a finite set of random variables, namely
\eq{
\vc g_N \coloneqq \{g_N(i,x) :\, 1\leq i\leq N, x\in D_i\}.
}
As before, let us partition this collection is the following disjoint sub-collections:
\eq{
\vc g_N^{(\ell)} &\coloneqq \{g_N(i,x) :\, i\in\llbrack n_{\ell-1}+1, n_\ell\rrbrack, x\in D_i\}, \\
\wh{\vc g}_N^{(\ell)} &\coloneqq \{g_N(i,x) :\, i\in\llbrack 1,N\rrbrack\setminus\llbrack n_{\ell-1}+1, n_\ell\rrbrack, x\in D_i\}. 
}
We will restrict our attention to these finite collections and thus regard all subsequent statements as concerning only finite-dimensional vectors.
%Moreover, we can regard $\mu_{N,\beta}$ and $\wh\mu_{N,\beta}^{(\ell)}$ as elements of a finite-dimensional simplex, since these measures on $\Sigma$ are entirely determined by the probabilities they assign to elements of $\Sigma_N$, the set of all $(2d)^N$ possible simple random walk paths starting at the origin and consisting of $N$ steps.
%Moreover, the measures $\mu_{N,\beta}$ and $\wh\mu_{N,\beta}^{(\ell)}$ are entirely determined by the probabilities they assign to the $(2d)^N$ possible simple random walk paths starting at the origin and consisting of $N$ steps.
%Furthermore, these probabilities are rational functions in the variables $\{\e^{g(i,x)} : i\geq1,x\in D_i\}$.
%In particular, the probability assigned to any given subset of $\Sigma_N$ is again a rational (and thus continuous) function in these variables.
In keeping with this finite-dimensional perspective, we will write $\Sigma_N$ to denote the set of the $(2d)^N$ possible simple random walk paths starting at the origin and consisting of $N$ steps.
It is natural to regard $\mu_{N,\beta}$ and $\wh\mu_{N,\beta}^{(\ell)}$ as measures on $\Sigma_N$, as opposed to $\Sigma$ defined in \eqref{sigma_def}.
Similarly, it makes sense to take \eqref{ell_overlap_def} as a definition of $R^{(\ell)}(\sigma^1,\sigma^2)$ for $\sigma^1,\sigma^2\in\Sigma_N$.

Now consider subsets of $\Sigma_N$ of the form
\eq{
S^{(\ell)}_{\sigma^1,\dots,\sigma^J,\delta} \coloneqq \bigcup_{j=1}^J \{\sigma\in\Sigma_N :\, R^{(\ell)}(\sigma^j,\sigma) \geq \delta\}, \quad \sigma^1,\dots,\sigma^J \in \Sigma_N,  \delta > 0, \ell\in\llbrack 1,L\rrbrack.
}
The event $\Esf^{(\ell)}_{N,J,\delta,\eps}$ from Corollary \ref{ptwise_thm} can be expressed as
\eq{
\Esf^{(\ell)}_{N,J,\delta,\eps} &= \bigcup_{\sigma^1,\dots,\sigma^J\in\Sigma_N} \{\mu_{N,\beta}(S_{\sigma^1,\dots,\sigma^J,\delta})\geq1-\eps\} \\
&= \Big\{\max_{\sigma^1,\dots,\sigma^J\in\Sigma_N} \mu_{N,\beta}(S_{\sigma^1,\dots,\sigma^J,\delta})\geq1-\eps\Big\} \\
&= \bigcap_{n = 1}^\infty \bigcup_{q=1}^\infty \bigg\{\bigg(\frac{1}{|\Sigma_N|^J}\sum_{\sigma^1,\dots,\sigma^J\in\Sigma_N} \mu_{N,\beta}(S_{\sigma^1,\dots,\sigma^J,\delta})^q\bigg)^{1/q} \geq 1 - \eps - \frac{1}{n}\bigg\}.
}
By monotonicity, it is evident that
\eeq{ \label{limit_of_continuous}
&\P_{\wh{\vc g}^{(\ell)}}(\Esf^{(\ell)}_{N,J,\delta,\eps}) \\
&= \lim_{n\to\infty}\lim_{q\to\infty} \P_{\wh{\vc g}^{(\ell)}}\bigg(\bigg(\frac{1}{|\Sigma_N|^J}\sum_{\sigma^1,\dots,\sigma^J\in\Sigma_N} \mu_{N,\beta}(S_{\sigma^1,\dots,\sigma^J,\delta})^q\bigg)^{1/q} \geq 1 - \eps - \frac{1}{n}\bigg).
}
Now we observe that the quantity
\eq{
\frac{1}{|\Sigma_N|^J}\sum_{\sigma^1,\dots,\sigma^J\in\Sigma_N} \mu_{N,\beta}(S_{\sigma^1,\dots,\sigma^J,\delta})^q
}
is a non-constant rational function in the variables $\{\e^{g(i,x)} :\, i\geq1,x\in D_i\}$.
Moreover, it remains non-constant even if the realization of $\wh{\vc g}^{(\ell)}_N$ is fixed (assuming $n_{\ell-1}(N) < n_{\ell}(N)$, which is true so long as $N\geq L$).
Therefore, Lemma \ref{rational_lemma} tells us that the map
\eq{
({\vc g}^{(\ell)}_N,\wh{\vc g}^{(\ell)}_N) \mapsto \bigg(\frac{1}{|\Sigma_N|^J}\sum_{\sigma^1,\dots,\sigma^J\in\Sigma_N} \mu_{N,\beta}(S_{\sigma^1,\dots,\sigma^J,\delta})^q\bigg)^{1/q}
}
satisfies the hypotheses of Lemma \ref{continuous_lemma} (note that $\e^{g(i,x)}$ is a continuous function of $g(i,x)$ and has a density with respect to Lebesgue measure).
In turn, Lemma \ref{continuous_lemma} guarantees the continuity---in particular, measurability---of the map
\eq{
\wh{\vc g}^{(\ell)} \mapsto \P_{\wh{\vc g}^{(\ell)}} \bigg(\bigg(\frac{1}{|\Sigma_N|^J}\sum_{\sigma^1,\dots,\sigma^J\in\Sigma_N} \mu_{N,\beta}(S_{\sigma^1,\dots,\sigma^J,\delta})^q\bigg)^{1/q} \geq 1 - \eps- \frac{1}{n}\bigg).
}
Consequently, \eqref{limit_of_continuous} exhibits $\wh{\vc g}^{(\ell)} \mapsto \P_{\wh{\vc g}^{(\ell)}}(\Esf^{(\ell)}_{N,J,\delta,\eps})$ as a limit of measurable functions, implying that this map is also measurable.
In particular, the event $\wh \Esf_{N,J,\delta,\eps}^{(\ell)}$ considered in \eqref{hat_event} is $\wh\FF^{(\ell)}$-measurable.
\end{proof}

\section{Acknowledgments}
I am indebted to Sourav Chatterjee for suggesting the consideration of a time-dependent inverse temperature.
This idea was the inspiration leading to the present article.
I also thank Francis Comets for valuable discussion during the workshop on ``Self-interacting random walks, Polymers and Folding'' held at Centre International de Rencontres Math\'ematiques, and Hubert Lacoin for directing my attention to \cite[Sec.~7]{carmona-hu02}.
Finally, I am very grateful to the referees for their corrections and suggestions, which improved the exposition.

%: BIBLIOGRAPHY
%\nocite{*} Uncomment to include all entries in the bibliography.
\bibliography{path_localization}

\begin{thebibliography}{10}

\bibitem{aizenman-lebowitz-ruelle87}
{\sc Aizenman, M., Lebowitz, J.~L., and Ruelle, D.}
\newblock Some rigorous results on the {S}herrington-{K}irkpatrick spin glass
  model.
\newblock {\em Comm. Math. Phys. 112}, 1 (1987), 3--20.

\bibitem{anderson58}
{\sc Anderson, P.~W.}
\newblock Absence of diffusion in certain random lattices.
\newblock {\em Phys. Rev. 109\/} (Mar 1958), 1492--1505.

\bibitem{auffinger-louidor11}
{\sc Auffinger, A., and Louidor, O.}
\newblock Directed polymers in a random environment with heavy tails.
\newblock {\em Comm. Pure Appl. Math. 64}, 2 (2011), 183--204.

\bibitem{bakhtin-seo20}
{\sc Bakhtin, Y., and Seo, D.}
\newblock Localization of directed polymers in continuous space.
\newblock {\em Electron. J. Probab. 25\/} (2020), Paper No. 142, 56.

\bibitem{bates18}
{\sc Bates, E.}
\newblock Localization of directed polymers with general reference walk.
\newblock {\em Electron. J. Probab. 23\/} (2018), Paper No. 30, 45.

\bibitem{bates-chatterjee20}
{\sc Bates, E., and Chatterjee, S.}
\newblock The endpoint distribution of directed polymers.
\newblock {\em Ann. Probab. 48}, 2 (2020), 817--871.

\bibitem{bates-chatterjee20II}
{\sc Bates, E., and Chatterjee, S.}
\newblock Localization in {G}aussian disordered systems at low temperature.
\newblock {\em Ann. Probab. 48}, 6 (2020), 2755--2806.

\bibitem{berger-lacoin21}
{\sc Berger, Q., and Lacoin, H.}
\newblock The {S}caling {L}imit of the {D}irected {P}olymer with {P}ower-{L}aw
  {T}ail {D}isorder.
\newblock {\em Comm. Math. Phys.\/} (2021).

\bibitem{berger-torri19}
{\sc Berger, Q., and Torri, N.}
\newblock Directed polymers in heavy-tail random environment.
\newblock {\em Ann. Probab. 47}, 6 (2019), 4024--4076.

\bibitem{carmona-hu02}
{\sc Carmona, P., and Hu, Y.}
\newblock On the partition function of a directed polymer in a {G}aussian
  random environment.
\newblock {\em Probab. Theory Related Fields 124}, 3 (2002), 431--457.

\bibitem{chatterjee19}
{\sc Chatterjee, S.}
\newblock Proof of the {P}ath {L}ocalization {C}onjecture for {D}irected
  {P}olymers.
\newblock {\em Comm. Math. Phys. 370}, 2 (2019), 703--717.

\bibitem{comets17}
{\sc Comets, F.}
\newblock {\em Directed polymers in random environments}, vol.~2175 of {\em
  Lecture Notes in Mathematics}.
\newblock Springer, Cham, 2017.
\newblock Lecture notes from the 46th Probability Summer School held in
  Saint-Flour, 2016.

\bibitem{comets-cosco??}
{\sc Comets, F., and Cosco, C.}
\newblock Brownian polymers in {P}oissonian environment: a survey.
\newblock Preprint, available at
  \href{https://arxiv.org/abs/1805.10899}{arXiv:1805.10899}.

\bibitem{comets-cranston13}
{\sc Comets, F., and Cranston, M.}
\newblock Overlaps and pathwise localization in the {A}nderson polymer model.
\newblock {\em Stochastic Process. Appl. 123}, 6 (2013), 2446--2471.

\bibitem{comets-neveu95}
{\sc Comets, F., and Neveu, J.}
\newblock The {S}herrington-{K}irkpatrick model of spin glasses and stochastic
  calculus: the high temperature case.
\newblock {\em Comm. Math. Phys. 166}, 3 (1995), 549--564.

\bibitem{comets-nguyen16}
{\sc Comets, F., and Nguyen, V.-L.}
\newblock Localization in log-gamma polymers with boundaries.
\newblock {\em Probab. Theory Related Fields 166}, 1-2 (2016), 429--461.

\bibitem{comets-yoshida06}
{\sc Comets, F., and Yoshida, N.}
\newblock Directed polymers in random environment are diffusive at weak
  disorder.
\newblock {\em Ann. Probab. 34}, 5 (2006), 1746--1770.

\bibitem{comets-yoshida13}
{\sc Comets, F., and Yoshida, N.}
\newblock Localization transition for polymers in {P}oissonian medium.
\newblock {\em Comm. Math. Phys. 323}, 1 (2013), 417--447.

\bibitem{denhollander09}
{\sc den Hollander, F.}
\newblock {\em Random polymers}, vol.~1974 of {\em Lecture Notes in
  Mathematics}.
\newblock Springer-Verlag, Berlin, 2009.
\newblock Lectures from the 37th Probability Summer School held in Saint-Flour,
  2007.

\bibitem{dey-zygouras16}
{\sc Dey, P.~S., and Zygouras, N.}
\newblock High temperature limits for {$(1+1)$}-dimensional directed polymer
  with heavy-tailed disorder.
\newblock {\em Ann. Probab. 44}, 6 (2016), 4006--4048.

\bibitem{eldredge16}
{\sc (https://math.stackexchange.com/users/822/nate eldredge), N.~E.}
\newblock The lebesgue measure of zero set of a polynomial function is zero.
\newblock Mathematics Stack Exchange.
\newblock URL:https://math.stackexchange.com/q/1920527 (version: 2016-09-09).

\bibitem{huse-henley85}
{\sc Huse, D.~A., and Henley, C.~L.}
\newblock Pinning and roughening of domain walls in {I}sing systems due to
  random impurities.
\newblock {\em Phys. Rev. Lett. 54}, 25 (1985), 2708--2711.

\bibitem{imbrie-spencer88}
{\sc Imbrie, J.~Z., and Spencer, T.}
\newblock Diffusion of directed polymers in a random environment.
\newblock {\em J. Statist. Phys. 52}, 3-4 (1988), 609--626.

\bibitem{panchenko08}
{\sc Panchenko, D.}
\newblock On differentiability of the {P}arisi formula.
\newblock {\em Electron. Commun. Probab. 13\/} (2008), 241--247.

\bibitem{seppalainen12}
{\sc Sepp{\"a}l{\"a}inen, T.}
\newblock Scaling for a one-dimensional directed polymer with boundary
  conditions.
\newblock {\em Ann. Probab. 40}, 1 (2012), 19--73.

\bibitem{talagrand06III}
{\sc Talagrand, M.}
\newblock Parisi measures.
\newblock {\em J. Funct. Anal. 231}, 2 (2006), 269--286.

\bibitem{torri16}
{\sc Torri, N.}
\newblock Pinning model with heavy tailed disorder.
\newblock {\em Stochastic Process. Appl. 126}, 2 (2016), 542--571.

\bibitem{vargas07}
{\sc Vargas, V.}
\newblock Strong localization and macroscopic atoms for directed polymers.
\newblock {\em Probab. Theory Related Fields 138}, 3-4 (2007), 391--410.

\bibitem{viveros?}
{\sc Viveros, R.}
\newblock Directed polymer for very heavy tailed random walks.
\newblock Preprint, available at
  \href{https://arxiv.org/abs/2003.14280}{arXiv:2003.14280}.

\end{thebibliography}

\end{document}